\documentclass[11pt]{article}
\usepackage[cp1251]{inputenc}
\usepackage{amsfonts,amsmath,amssymb,theorem}
\usepackage[pdftex, colorlinks, urlcolor=blue, pdfborder={0 0 0 [0 0]}]{hyperref}

\newtheorem{theorem}{Theorem}
\newtheorem{lemma}{Lemma}
\newtheorem{cor}{Corollary}
\newtheorem{prp}{Proposition}
\newtheorem{remark}{Remark}
\newtheorem{dfn}{Definition}

\newenvironment{proof}
{\vspace{1pt}\par{\sl%
Proof.\,\ }}%
{\noindent\vspace{1pt}}

\begin{document}

\title{On a generalization of the It\^{o}-Wentzell formula for system of generalized It\^{o}'s SDE  and  the stochastic first integral}

%    Information for second author
\author{Elena V. Karachanskaya \\
{Pacific National University, Russia, Khabarovsk}}

%\address{Department of foundational and Computer Sciences, Pacific National University, Russia, Khabarovsk}

\maketitle

\begin{abstract}
Generalization of the It\^{o}-Wentzell formula for  the generalized It\^{o}'s SDE (It\^{o}'s GSDE) system with a non-centered measure is constructed   on the basis of the stochastic kernel of integral transformation. The  It\^{o}'s GSDE system for the kernel the solution of which is the kernel of the integral invariant, is formed. This invariant is connected with the solution of the It\^{o}'s GSDE system non-centered measure.  The concept of a stochastic first integral of the It\^{o}'s GSDE system with non-centered measure is introduced and conditions  that when being performed the  random function is the first integral of the set It\^{o}'s GSDE system are defined.
\end{abstract}

\section*{Introduction}

The specific approach to the stochastic integration based on the fact that not the point but its neighborhoods is reflected  according to some quite  different  from the traditional rule, based on Malliavin's calculus, was presented in {\rm{\cite{D_78,D_89,D_02}}}.

The transfer to the stochastic equations for definite realizations in based on comparing the dynamics of the space structures element marked by an integer to the dynamics of the trajectory's realization for which this integer is the initial value. This possibility is stipulated  by the  proximity of the trajectories, in the sense of probability, beginning the domain of the initial neighborhood for each of the temporary intervals.

The investigation of open systems that are characterized by the presence of random disturbances in them both continuous described in Wiener's processes and jump-like ones, presented in Poisson's processes is urgent nowadays. At the same time in case of random disturbances some characteristics of the system must be presented. And the problems of constructing the program control in the open systems arise. When a disturbance is caused by Wiener processes only there are some types of algorythms for constructing program control. The Poisson component of the random disturbance still not let to do program control for such systems. The present generalization of the It\^{o}-Wentzell formula solves this problem \cite{11_KchUpr}.

\section{The stochastic volume }

Let  ${\bf x}(t)$ -- is the random dynamic process and this process is solution of the next SDE system:
\begin{equation}\label{Ayd01.1}
\begin{array}{c}
  dx_{i}(t)= \displaystyle a_{i}\Bigl(t;{\bf x}(t) \Bigr)\, dt
  +\sum\limits_{k=1}^{m}
b_{i,k}\Bigl(t;{\bf x}(t)\Bigr)\, dw_{k}(t) +
\int\limits_{\mathbb{R}(\gamma)}g_{i}\Bigl(t;{\bf x}(t);\gamma\Bigr)\nu(dt;d\gamma), \\
  {\bf x}(t)={\bf x}\Bigl(t;{\bf x}(0) \Bigr) \Bigr|_{t=0}={\bf
  x}(0), \ \ \ \ \ \ i=\overline{1,n}, \ \ \ \ \ \ t\geq 0,
\end{array}
\end{equation}
where  ${\bf w} (t)$   is $m$--dimensional Wiener process,  $\nu(t;\Delta\gamma)$ --  is homogeneous on $t$ non centered Poisson measure.

We suppose that the coefficients  ${ a}(t;{\bf x})$, ${ b}(t;{\bf x})$,   and ${g}(t;{\bf x};\gamma)$  are bounded and continuous together with own derivations  $
\displaystyle \frac{\partial a_{i}(t;{\bf x})}{\partial x_{j}}$,\,
$\displaystyle \frac{\partial b_{i,k}(t;{\bf x})}{\partial
x_{j}}$,\, $\displaystyle \frac{\partial g_{i}(t;{\bf
x};\gamma)}{\partial x_{j}}$,\, $i,j=\overline{1,n}
$   respect to  $(t;{\bf x};\gamma)$.

These conditions are sufficient for existing random variables
\begin{equation*}
\textit{J}_{i,j}(t)= \mathop {\operatorname{l}
.i.m.}\limits_{\left|\Delta \right|= \Delta_{j}\rightarrow
0}\displaystyle \frac{x_{i}\Bigl(t; {\bf x}(0)+\Delta
\Bigr)-x_{i}\Bigl(t; {\bf x}(0)\Bigr)}{\Delta_{j}},
\end{equation*}
that can be defined by solving the system (\ref{Ayd01.1}) and the system of stochastic equation {\rm{\cite{GS_68}}}:
\begin{equation}\label{Ayd2.2}
\begin{array}{c}
  d\textit{J}_{i,j}(t)=\textit{J}_{l,j}(t)\, \biggl[\displaystyle
\frac{\partial a_{i}(t;{\bf x}(t))}{\partial x_{l}(0)}\, dt +
\beta \,\displaystyle \frac{\partial b_{i,k}(t;{\bf
x}(t))}{\partial
x_{l}(0)}\, dw_{k}(t) + \\
+\displaystyle \int\limits_{\mathbb{R}(\gamma)}
\frac{\partial g_{i}(t;{\bf x}(t))}{\partial x_{l}(0)}\nu(dt;d\gamma) \biggr], \\
\textit{J}_{i,j}(t)=\textit{J}_{i,j}\Bigl(t; {\bf x}(0) \Bigr)
\Bigr|_{t=0}=\delta_{i,j}=\left\{
\begin{array}{ll}
  0, & i\neq j,  \\
  1, & i=j.
\end{array}
  \right.
\end{array}
\end{equation}

Let  $\textit{J}(t)= \textit{J}\Bigl(t; {\bf x}(0)\Bigr)=\det\Bigl(\textit{J}_{ij}\Bigl(t; {\bf x}(0)\Bigr)\Bigr)$, $i,j=\overline{1,n}$. Then  the random variable $\textit{J}(t)$  can be considerer a Jacobian of transition from  ${\bf x}(0)$  to ${\bf x}(t;{\bf x}(0))$.

By means of random variables $\textit{J}_{i,j}(t)$   it is possible to construct different random manifolds ${\cal M}\Bigl(t; {\bf x}(t;\lambda ) \Bigr)$ ($\lambda\in {\cal
D} \subset \mathbb{R}^{r}, \  {\bf x}(t
;\lambda )\in\mathbb{R}^{n}, \  r \leq n $), having  compared them to configuration manifolds of the initial manifold ${\cal M}\Bigl(0; {\bf x}(0;\lambda ) \Bigr)$ ($\lambda\in {\cal
D} \subset \mathbb{R}^{r}, \  {\bf x}(0;\lambda )={\bf
x}(\lambda )\in\mathbb{R}^{n}, \ r \leq n $).

\begin{dfn}\label{Aydo2}{\rm{\cite{D_89}}}
Let's call the structure   $
d\Gamma(t)=\textit{J}(t) \,d\Gamma\Bigl({\bf x}(0)\Bigr)
$, where  $
d\Gamma({\bf x})= \displaystyle \prod \limits_{i=1}^{n} dx_{i}$,
an element of stochastic volume that it generated by some random process ${\bf x}\Bigl(t; {\bf x}(0;
\lambda)\Bigr)$, where $\textit{J}(t)$ is a Jacobian built of elements  $\textit{J}_{ij}\Bigl(t; {\bf x}(0)\Bigr)$.
\end{dfn}

 According to definition \ref{Aydo2}  a random structure is introduced:
\begin{equation}\label{Ayd2.3}
\begin{array}{c}
  {\cal I}_{\Gamma(t)}(t)=\underbrace{\int \cdots
\int}\limits_{\Gamma (t)} f(t;{\bf z}) \, d \Gamma ({\bf z})
  =\underbrace{\int \cdots \int} \limits_{\Gamma (0)}f\Bigl(t;{\bf
x}(t;{\bf y})\Bigr) \, \textit{J}(t)\, d \Gamma ({\bf y}),
\end{array}
\end{equation}
where ${\bf x}(t;{\bf y}) $ is a stochastic process with random initial condition  ${\bf y}$.

in so doing the integration over a stochastic volume  $\Gamma(t)$  is carried out Riemann's integration sums, mean square limit concept for any continuous functions by ${\bf z}$, and in general, random functions $f(t;{\bf z})$.

 Generically, the structure \eqref{Ayd2.3} is a random variable. The statement of the  classic integral calculus can be associated with random structure \eqref{Ayd2.3}.  In this sense  the equality  \eqref{Ayd2.3} is  an analogue for substituting variables in the classic calculus. Here, a  Jacobian of transformation $\textit{J}_{i,j}(t)$   is a random value comparising the  solutions of GSDE system (\ref{Ayd2.2}).

Let us determine the conditions for the invariance (or conservation) of the stochastic volume.

\section{Stochastic kernel of the stochastic integral invariant}

Let's consider the It\^{o}'s GSDE  system of the following kind (\ref{Ayd01.1}). Let's assume that  $\rho(t;{\bf x};\omega)$ --   is a random function which is measured with respect to  $\sigma$-algebras flow  $\Bigl\{\mathcal{F}\Bigr\}_{0}^{T}$,
$\mathcal{F}_{t_{1}}\subset\mathcal{F}_{t_{2}}$, $t_{1}<t_{2}$. This flow is  adapted with the processes   ${\bf w} (t)$ and  $\nu(t;\Delta\gamma)$. Each function $f(t;{\bf x})$  from the set of local bounded functions which have second bounded   derivatives with respect to  ${\bf x}$  the next relations hold:
\begin{equation}\label{Aydyad1}
\displaystyle\int\limits_{\mathbb{R}^{n}}\rho(t;{\bf x})f(t;{\bf
x})d\Gamma({\bf x})=\int\limits_{\mathbb{R}^{n}}\rho(0;{\bf
y})f(t;{\bf x}(t;{\bf y}))d\Gamma({\bf y})
\end{equation}
\begin{equation}\label{Aydyad2}
\displaystyle\int\limits_{\mathbb{R}^{n}}\rho(0;{\bf
x})d\Gamma({\bf x})=1,
\end{equation}
\begin{equation}\label{Aydyad3}
\begin{array}{c}
\displaystyle  \lim\limits_{|{\bf x}|\to \infty}\rho(0;{\bf x})=0,
\ \ \ d\Gamma({\bf x})=\prod\limits_{i=1}^{n}dx_{i}.
%\\
%\displaystyle\lim\limits_{|{\bf x}|\to 0}\frac{\partial f({\bf
%x})}{\partial x_{i}}=\lim\limits_{|{\bf x}|\to \infty}f({\bf
%x})=0,
\end{array}
\end{equation}
Here  ${\bf x}(t;{\bf y})$  is the solution of (\ref{Ayd01.1}).

If we have $f(t;{\bf x})=1$,  then taking into consideration the conditions (\ref{Aydyad1}) and (\ref{Aydyad2}) we'll have the equality
\begin{equation}\label{usl-inv}
\displaystyle\int\limits_{\mathbb{R}^{n}}\rho(t;{\bf
x})d\Gamma({\bf x})=\int\limits_{\mathbb{R}^{n}}\rho(0;{\bf
y})d\Gamma({\bf y})=1.
\end{equation}
It means that there exists a random functional, which retains the constant value  for the random function  $\rho(t;{\bf x})=\rho(t;{\bf
x};\omega)$:
\begin{equation}\label{ro1}
\displaystyle\int\limits_{\mathbb{R}^{n}}\rho(t;{\bf
x})d\Gamma({\bf x})=1.
\end{equation}
 We consider this functional a stochastic volume.
Then the equation \eqref{Aydyad1} under conditions \eqref{Aydyad2} and \eqref{Aydyad3} is a stochastic integral invariant for function $f(t;{\bf x})$.

\begin{dfn}
Let's call the non-negative function $\rho(t;{\bf x})$   a stochastic kernel or stochastic density of the stochastic integral invariant if equation \eqref{Aydyad1}, \eqref{Aydyad2} and \eqref{Aydyad3}   are kept.
\end{dfn}

\begin{remark}
But the essential distinction that let to consider the invariance of the random volume on the basis of integral invariant kernel in {\rm\cite{D_89,D_02}} to that  there is a functional factor in \eqref{Aydyad1}. Thus, the notion of integral invariant kernel for the system of the ordinary differential equations can be considered a particular case of the introduced one in {\rm\cite{D_89,D_02}} if we take $f(t;{\bf x})=1$ and examine the integration by determined volume, having excluded the randomness preset in Wiener and Poisson processes.
\end{remark}

Let arbitrary function $f(t;{\bf x})$ has the first derivation with respect to $t$ and the second derivations wish respect to  ${\bf x}$. We determine relations for function  $\rho(t;{\bf x})$ which implies that  $\rho(t;{\bf x})$ is integral invariant kernel for $f(t;{\bf x})$.

Let  ${\bf x}(t)$ -- is solution of (\ref{Ayd01.1}) and  $f(t;{\bf x}(t))$ is random function. We apply the generalized It\^{o}'s formula {\rm{\cite{GS_68}}}:
\begin{equation}\label{Ayd2}
\begin{array}{c}
 \displaystyle d_{t}f(t;{\bf x}(t))=\Bigl[ \frac{\partial f(t;{\bf x}(t)) }{\partial t}+
 \sum\limits_{i=1}^{n}a_{i}(t;{\bf x}(t))\frac{\partial f(t;{\bf x}(t)) }{\partial x_{i}}+\Bigr.\\
\Bigl.+ \displaystyle  \frac{1}{2} \sum\limits_{i=1}^{n}
 \sum\limits_{j=1}^{n}
 \sum\limits_{k=1}^{m}b_{i\,k}(t;{\bf x}(t))b_{j\,k}(t;{\bf x}(t))
 \frac{\partial^{\,2} f(t;{\bf x}(t)) }{\partial x_{i}x_{j}}\Bigr]dt +\\
 +\displaystyle \sum\limits_{i=1}^{n}
 \sum\limits_{k=1}^{m}b_{i\,k}(t;{\bf x}(t))\frac{\partial f(t;{\bf x}(t)) }{\partial x_{i}} dw_{k}(t)+\\
 +\displaystyle \int\limits_{\mathbb{R}(\gamma)}
 \Bigl[ f\left(t;{\bf x}(t)+g(t;{\bf x}(t);\gamma)\right)- f(t;{\bf x}(t))
 \Bigr]\nu(dt;d\gamma).
 \end{array}
\end{equation}

\begin{remark}
Later on for notational  simplicity  we'll not use the  summation  symbol if we have double occurred indexes.
\end{remark}

Let's differentiate both parts of equation \eqref{Aydyad1} wish respect to $t$ taking into account that $f(t;{\bf x})$ of the left part is a determined function while $f(t;{\bf x}(t;{\bf y}))$ of the right part is a random process.
\begin{equation*}%\label{Aydyad4}
\begin{array}{c}
  \displaystyle\int\limits_{\mathbb{R}^{n}}\Bigl(f(t;{\bf
x})d_{t}\rho(t;{\bf x})+\rho(t;{\bf x})\frac{\partial f(t;{\bf
x})}{\partial t}dt\Bigr)d\Gamma({\bf
x})= \int\limits_{\mathbb{R}^{n}}\rho(0;{\bf y})d_{t}f(t;{\bf x}(t;{\bf
y}))d\Gamma({\bf y}).
\end{array}
\end{equation*}
Then, taking into account both (\ref{Aydyad1}) and  (\ref{Ayd2}) we get:
\begin{equation}\label{Aydyad5}
\begin{array}{c}
  \displaystyle\int\limits_{\mathbb{R}^{n}}\Bigl(f(t;{\bf
x})d_{t}\rho(t;{\bf x})+\rho(t;{\bf x})\frac{\partial f(t;{\bf
x})}{\partial t}dt\Bigr)d\Gamma({\bf
x})=\\
=\displaystyle\int\limits_{\mathbb{R}^{n}}\rho(0;{\bf y})d_{t}
f(t;{\bf x}(t;{\bf y}))d\Gamma({\bf y})
=\displaystyle\int\limits_{\mathbb{R}^{n}}\rho(t;{\bf
x})d_{t}f(t;{\bf
x})d\Gamma({\bf x})= \\
=\displaystyle\int\limits_{\mathbb{R}^{n}}d\Gamma({\bf
x})\rho(t;{\bf x})\cdot \biggl[ \Bigl[\frac{\partial f(t;{\bf
x}(t)) }{\partial t}+
 a_{i}(t;{\bf x})\frac{\partial f(t;{\bf x}) }{\partial x_{i}}+\Bigr.\biggr.\\
\Bigl.+ \displaystyle  \frac{1}{2} b_{i\,k}(t;{\bf
x})b_{j\,k}(t;{\bf x})
 \frac{\partial^{\,2} f(t;{\bf x}) }{\partial x_{i} \partial x_{j}}\Bigr]dt
 +\displaystyle
 b_{i\,k}(t;{\bf x})\frac{\partial f(t;{\bf x}) }{\partial x_{i}} dw_{k}(t)+\\
\biggl. +\displaystyle \int\limits_{\mathbb{R}(\gamma)}
 \Bigl[ f\left(t;{\bf x}+g(t;{\bf x};\gamma)\right)- f(t;{\bf x})
 \Bigr]\nu(dt;d\gamma)\biggr].
\end{array}
\end{equation}
Let's consider the integral
\begin{equation}\label{Aydy1}
I=\displaystyle\int\limits_{\mathbb{R}^{n}}d\Gamma({\bf
x})\rho(t;{\bf x})
 f\left(t;{\bf x}+g(t;{\bf x};\gamma)\right).
\end{equation}
We change variables:
\begin{equation}\label{Aydx1y}
{\bf x}+g(t;{\bf x};\gamma)={\bf y},
\end{equation}
Having taken into account the transition from  ${\bf x}$  to  ${\bf y}$, we get
$
{\bf x}={\bf y}-g(t;{\bf x};\gamma)={\bf y}-g(t;{\bf
x}^{-1}(t;{\bf y};\gamma);\gamma)
$.  And the integral $I$  becomes:
\begin{equation}\label{Aydy2}
\begin{array}{c}
  I=\displaystyle \int\limits_{\mathbb{R}^{n}}d\Gamma({\bf
y})\rho\left(t;{\bf y}-g(t;{\bf x}^{-1}(t;{\bf
y};\gamma);\gamma)\right)
  \cdot f(t;{\bf y})\cdot D\left( {\bf x}^{-1}(t;{\bf y};\gamma)  \right),
\end{array}
\end{equation}
where  $D\left( {\bf x}^{-1}(t;{\bf y};\gamma)\right) $  is a Jacobian of transition from ${\bf x}$  to  ${\bf y}$.

Taking into accout  (\ref{Aydy2}) and integration over the $\mathbb{R}^{n}$  we have:
\begin{equation}\label{Aydy3}
\begin{array}{c}
  \displaystyle\int\limits_{\mathbb{R}^{n}}d\Gamma({\bf
x})\rho(t;{\bf x})
 \displaystyle \int\limits_{\mathbb{R}(\gamma)}
 \Bigl[ f\left(t;{\bf x}+g(t;{\bf x};\gamma)\right)- f(t;{\bf x})
 \Bigr]\nu(dt;d\gamma)= \\
  =\displaystyle\int\limits_{\mathbb{R}^{n}}d\Gamma({\bf
x})\rho(t;{\bf x})
 \displaystyle \int\limits_{\mathbb{R}(\gamma)}
f\left(t;{\bf x}+g(t;{\bf
x};\gamma)\right)\nu(dt;d\gamma)-\\
-\displaystyle\int\limits_{\mathbb{R}^{n}}d\Gamma({\bf
x})\rho(t;{\bf x})
 \displaystyle \int\limits_{\mathbb{R}(\gamma)} f(t;{\bf
x})
\nu(dt;d\gamma)= \\
=\displaystyle \int\limits_{\mathbb{R}^{n}}d\Gamma({\bf x})
  \int\limits_{\mathbb{R}(\gamma)}  \Bigl(\Bigr.\rho\left(t;{\bf x}-g(t;{\bf x}^{-1}(t;{\bf
x};\gamma);\gamma)\right) \cdot\\
\cdot
 f(t;{\bf x})\cdot D\left( {\bf x}^{-1}(t;{\bf x};\gamma)  \right)\Bigl.\Bigr) \nu(dt;d\gamma)-\\
 -\displaystyle\int\limits_{\mathbb{R}^{n}}d\Gamma({\bf
x})\rho(t;{\bf x})
 \displaystyle \int\limits_{\mathbb{R}(\gamma)} f(t;{\bf x}) \nu(dt;d\gamma)=\\
=\displaystyle \int\limits_{\mathbb{R}^{n}}d\Gamma({\bf
x})f(t;{\bf x})\int\limits_{\mathbb{R}(\gamma)}
\Bigl[\rho\left(t;{\bf x}-g(t;{\bf x}^{-1}(t;{\bf
x};\gamma);\gamma)\right)
 \cdot \Bigr.\\
 \cdot\Bigl. D\left( {\bf x}^{-1}(t;{\bf x};\gamma)  \right)-\rho(t;{\bf
 x})\Bigr]\nu(dt;d\gamma).
\end{array}
\end{equation}
With due regard for (\ref{Aydyad2}) we calculate the following integrals using integration  by parts:
\begin{equation}\label{Aydi01}
\displaystyle\int\limits_{\mathbb{R}^{n}}d\Gamma({\bf
x})\rho(t;{\bf x})a_{i}(t;{\bf x})\frac{\partial f(t;{\bf x})
}{\partial x_{i}},
\end{equation}
\begin{equation}\label{Aydi02}
\displaystyle\int\limits_{\mathbb{R}^{n}}d\Gamma({\bf
x})\rho(t;{\bf x})b_{i\,k}(t;{\bf x})\frac{\partial f(t;{\bf x})
}{\partial x_{i}},
\end{equation}
\begin{equation}\label{Aydi03}
\displaystyle\int\limits_{\mathbb{R}^{n}}d\Gamma({\bf
x})\rho(t;{\bf x})b_{i\,k}(t;{\bf x})b_{j\,k}(t;{\bf x})
 \frac{\partial^{\,2} f(t;{\bf x}) }{\partial x_{i} \partial x_{j}}.
\end{equation}
For equation (\ref{Aydi01}) we get:
\begin{equation*}%\label{Aydi1}
\begin{array}{c}
  \displaystyle\int\limits_{\mathbb{R}^{n}}d\Gamma({\bf
x})\rho(t;{\bf x})a_{i}(t;{\bf x})\frac{\partial f(t;{\bf x})
}{\partial x_{i}}
=\displaystyle\int\limits_{-\infty}^{+\infty}\prod\limits_{j=1}^{n}dx_{j}\rho(t;{\bf
x})a_{i}(t;{\bf x})\frac{\partial f(t;{\bf x}) }{\partial x_{i}}= \\
  =\displaystyle\int\limits_{-\infty}^{+\infty}\prod\limits_{j=1}^{n-1}dx_{j}
  \int\limits_{-\infty}^{+\infty}\rho(t;{\bf
x})a_{i}(t;{\bf x})\frac{\partial f(t;{\bf x}) }{\partial
x_{i}}dx_{i}.
\end{array}
\end{equation*}
Then we calculate the inner integral using integration in parts, having taken into account  (\ref{Aydyad3}):
\begin{equation*}%\label{Aydi2}
\begin{array}{c}
\displaystyle\int\limits_{-\infty}^{+\infty}\rho(t;{\bf
x})a_{i}(t;{\bf x})\frac{\partial f(t;{\bf x}) }{\partial
x_{i}}dx_{i}=\\
=\rho(t;{\bf x})a_{i}(t;{\bf x})f(t;{\bf
x})\biggl|_{-\infty}^{+\infty}-\displaystyle\int\limits_{-\infty}^{+\infty}f(t;{\bf
x})\displaystyle\frac{\partial \left( \rho(t;{\bf x})a_{i}(t;{\bf
x})\right) }{\partial
x_{i}}dx_{i} =\\
=-\displaystyle\int\limits_{-\infty}^{+\infty}f(t;{\bf
x})\displaystyle\frac{\partial \left( \rho(t;{\bf x})a_{i}(t;{\bf
x})\right) }{\partial x_{i}}dx_{i} .
\end{array}
\end{equation*}
Therefore, we get
\begin{equation}\label{Aydi10}
\begin{array}{c}
\displaystyle\int\limits_{\mathbb{R}^{n}}d\Gamma({\bf
x})\rho(t;{\bf x})a_{i}(t;{\bf x})\frac{\partial f(t;{\bf x})
}{\partial x_{i}}
=\displaystyle\int\limits_{-\infty}^{+\infty}\prod\limits_{j=1}^{n}dx_{j}\rho(t;{\bf
x})a_{i}(t;{\bf x})\frac{\partial f(t;{\bf x}) }{\partial x_{i}}= \\
  =\displaystyle\int\limits_{-\infty}^{+\infty}\prod\limits_{j=1}^{n-1}dx_{j}
  \int\limits_{-\infty}^{+\infty}\rho(t;{\bf
x})a_{i}(t;{\bf x})\frac{\partial f(t;{\bf x}) }{\partial
x_{i}}dx_{i}=\\
=-\displaystyle\int\limits_{-\infty}^{+\infty}\prod\limits_{j=1}^{n}dx_{j}
\displaystyle\int\limits_{-\infty}^{+\infty}f(t;{\bf
x})\displaystyle\frac{\partial \left( \rho(t;{\bf x})a_{i}(t;{\bf
x})\right) }{\partial
x_{i}}dx_{i} =\\
=
-\displaystyle\int\limits_{\mathbb{R}^{n}}d\Gamma({\bf
x})f(t;{\bf x})\displaystyle\frac{\partial \left( \rho(t;{\bf
x})a_{i}(t;{\bf x})\right) }{\partial x_{i}}
\end{array}
\end{equation}

Similarly we calculate the second integral (\ref{Aydi02}) and the third integral (\ref{Aydi03}) by using  twice  the integration  in parts:
\begin{equation}\label{Aydi20}
\begin{array}{c}
  \displaystyle\int\limits_{\mathbb{R}^{n}}d\Gamma({\bf
x})\rho(t;{\bf x})b_{i\,k}(t;{\bf x})\frac{\partial f(t;{\bf x})
}{\partial x_{i}}
  =-\displaystyle\int\limits_{\mathbb{R}^{n}}d\Gamma({\bf
x})f(t;{\bf x})\frac{\partial\rho(t;{\bf x})b_{i\,k}(t;{\bf x})
}{\partial x_{i}},
\end{array}
\end{equation}
\begin{equation}\label{Aydi30}
\begin{array}{c}
  \displaystyle\int\limits_{\mathbb{R}^{n}}d\Gamma({\bf
x})\rho(t;{\bf x})b_{i\,k}(t;{\bf x})b_{j\,k}(t;{\bf x})
 \frac{\partial^{\,2} f(t;{\bf x}) }{\partial x_{i} \partial x_{j}}=\\
 =\displaystyle\int\limits_{\mathbb{R}^{n}}d\Gamma({\bf x})f(t;{\bf
x})
 \frac{\partial^{\,2} \left(\rho(t;{\bf x})b_{i\,k}(t;{\bf x})b_{j\,k}(t;{\bf x})\right) }{\partial x_{i} \partial
 x_{j}}.
\end{array}
\end{equation}

In equation (\ref{Aydyad5}) we move all terms to the right part and   taking into account  (\ref{Aydy3}), (\ref{Aydi10}), (\ref{Aydi20}) and  (\ref{Aydi30}) we obtain the following  equation:
\begin{equation}\label{Aydi}
\begin{array}{c}
 0= \displaystyle\int\limits_{\mathbb{R}^{n}}d\Gamma({\bf x})f(t;{\bf
x})\cdot \biggl[-d_{t}\left(\rho(t;{\bf x})\right)-\frac{\partial
f(t;{\bf x}(t)) }{\partial t}+\biggr.\\
+ \displaystyle\frac{\partial f(t;{\bf x}(t)) }{\partial
t}-\frac{\partial\rho(t;{\bf x})b_{i\,k}(t;{\bf x}) }{\partial
x_{i}}dw_{k}(t)+\\
+\Bigl(\Bigr.-\displaystyle\frac{\partial \left( \rho(t;{\bf
x})a_{i}(t;{\bf x}(t))\right) }{\partial x_{i}}
+\displaystyle\frac{1}{2}\frac{\partial^{\,2} \left(\rho(t;{\bf
x})b_{i\,k}(t;{\bf x})b_{j\,k}(t;{\bf x})\right) }{\partial x_{i} \partial x_{j}}\Bigl.\Bigr)dt + \\
 \biggl.+\displaystyle\int\limits_{\mathbb{R}(\gamma)} \Bigl[\rho\left(t;{\bf
x}-g(t;{\bf x}^{-1}(t;{\bf x};\gamma);\gamma)\right)
  \cdot D\left( {\bf x}^{-1}(t;{\bf x};\gamma)  \right)-\rho(t;{\bf
 x})\Bigr]\nu(dt;d\gamma)\biggr].
\end{array}
\end{equation}
To make the equation (\ref{Aydi}) true for any locally bounded function $f(t;{\bf x})$ obtaining the limited second-order derivatives the integral invariant $\rho(t;{\bf x})$ must be the solution for the following stochastic equation:
\begin{equation}\label{Aydii}
\begin{array}{c}
 d_{t}\rho(t;{\bf x})=
-\displaystyle\frac{\partial\rho(t;{\bf x})b_{i\,k}(t;{\bf x})
}{\partial x_{i}}dw_{k}(t) +\Bigl(-\displaystyle\frac{\partial
\left( \rho(t;{\bf x})a_{i}(t;{\bf x})\right) }{\partial
x_{i}}+\\
+\displaystyle\frac{1}{2}\frac{\partial^{\,2} \left(\rho(t;{\bf
x})b_{i\,k}(t;{\bf x})b_{j\,k}(t;{\bf x})\right) }{\partial x_{i} \partial x_{j}}\Bigr)dt + \\
 +\displaystyle\int\limits_{\mathbb{R}(\gamma)} \Bigl[\rho\left(t;{\bf
x}-g(t;{\bf x}^{-1}(t;{\bf x};\gamma);\gamma)\right)
 \cdot D\left( {\bf x}^{-1}(t;{\bf x};\gamma)  \right)-\rho(t;{\bf
 x})\Bigr]\nu(dt;d\gamma).
\end{array}
\end{equation}
The following terms must be kept:
\begin{equation}\label{Aydii1}
\begin{array}{c}
\rho(t;{\bf x})\Bigl|_{t=0}=\rho(0;{\bf x})\in C_{0}^{2},  \\
\lim\limits_{|{\bf x}|\to \infty}\rho(0;{\bf x})=0, \ \ \ \
\displaystyle\lim\limits_{|{\bf x}|\to
\infty}\frac{\partial\rho(0;{\bf x})}{\partial x_{i}}=0.
 \end{array}
\end{equation}
Thus, we get the invariance conditions of a stochastic volume and the following theorem is proved.
\begin{theorem}\label{thro1}
Let   ${\bf x}(t)$,
${\bf x}\in\mathbb{R}^{n}$ --   is the solution of It\^{o}'s GSDE  system:
\begin{equation*}%\label{Ayd01.1}
\begin{array}{c}
  dx_{i}(t)= \displaystyle a_{i}\Bigl(t;{\bf x}(t) \Bigr)\, dt
  +\sum\limits_{k=1}^{m}
b_{i,k}\Bigl(t;{\bf x}(t)\Bigr)\, dw_{k}(t) +
\int\limits_{\mathbb{R}(\gamma)}g_{i}\Bigl(t;{\bf x}(t);\gamma\Bigr)\nu(dt;d\gamma), \\
  {\bf x}(t)={\bf x}\Bigl(t;{\bf x}(0) \Bigr) \Bigr|_{t=0}={\bf
  x}(0), \ \ \ \ \ \ i=\overline{1,n}, \ \ \ \ \ \ t\geq 0,
\end{array}
\end{equation*}
where ${\bf w} (t)$ -- is $m$-dimentional  Wiener process,  $\nu(t;\Delta\gamma)$ -- is homogeneous on $t$ non centered Poisson measure.  Assume that  $\rho(t;{\bf x})$   is a random function which is measurable function with respect to   $\sigma$-algebras flow $\Bigl\{\mathcal{F}\Bigr\}_{0}^{T}$,
$\mathcal{F}_{t_{1}}\subset\mathcal{F}_{t_{2}}$, $t_{1}<t_{2}$, which adapting     ${\bf w} (t)$ and $\nu(t;\Delta\gamma)$  processes and with respect to any function $f(t;{\bf x})\in\mathfrak{S}$  from the set of locally bounded functions having the second bounded  derivatives in ${\bf x}$.  The function  $\rho(t;{\bf x})$ is the stochastic kernel of the stochastic integral invariant \eqref{Aydyad1}
 for arbitrary locally bounded function  $f(t;{\bf x})\in\mathfrak{S}$  if it's a solution of It\^{o}'s GSDE  system \eqref{Aydii}  meeting the initial conditions \eqref{Aydii1}.
\end{theorem}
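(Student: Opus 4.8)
The plan is to follow the differential identity satisfied by the stochastic integral invariant \eqref{Aydyad1} and to extract from it the equation that $\rho$ must obey. First I would take the stochastic differential $d_t$ of both sides of \eqref{Aydyad1}. On the left $f(t;{\bf x})$ is deterministic, so $d_t$ falls only on $\rho(t;{\bf x})$, up to the term $\rho(t;{\bf x})\,\frac{\partial f(t;{\bf x})}{\partial t}\,dt$; on the right $f(t;{\bf x}(t;{\bf y}))$ is the composition of the deterministic function with the It\^o process with jumps ${\bf x}(t;{\bf y})$ solving \eqref{Ayd01.1}, so I would apply the generalized It\^o formula \eqref{Ayd2} \cite{GS_68}. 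Using \eqref{Aydyad1} once more to rewrite the right-hand side as an integral over $\mathbb{R}^{n}$ with weight $\rho(t;{\bf x})$, one arrives at \eqref{Aydyad5}, an identity between stochastic integrals over $\mathbb{R}^{n}$, all tested against $f$.

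Second, I would transfer the $\rho$-weight from the derivatives of $f$ onto $\rho$ and the coefficients. For the drift and diffusion terms this is integration by parts in each $x_{i}$: the decay hypotheses \eqref{Aydyad3} (and, at the initial time, \eqref{Aydii1}) kill the boundary terms at $\pm\infty$, yielding \eqref{Aydi10}, \eqref{Aydi20} (one integration by parts) and \eqref{Aydi30} (two integrations by parts). For the Poisson term I would perform the substitution \eqref{Aydx1y}, ${\bf y}={\bf x}+g(t;{\bf x};\gamma)$, whose inverse ${\bf x}={\bf x}^{-1}(t;{\bf y};\gamma)$ exists and is smooth under the boundedness assumptions on $g$ and its $x$-derivatives; this introduces the Jacobian $D({\bf x}^{-1}(t;{\bf x};\gamma))$ and gives \eqref{Aydy3}. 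Inserting \eqref{Aydy3}, \eqref{Aydi10}, \eqref{Aydi20}, \eqref{Aydi30} into \eqref{Aydyad5} and collecting all terms under a single integral multiplying $f(t;{\bf x})$, the two $\frac{\partial f}{\partial t}$ contributions cancel and one obtains \eqref{Aydi}: for every admissible $f$, $\int_{\mathbb{R}^{n}} f(t;{\bf x})\,[\cdots]\,d\Gamma({\bf x})=0$.

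Third, I would invoke the arbitrariness of $f$. Since the bracket in \eqref{Aydi} is, for fixed $t$, a locally integrable random function of ${\bf x}$ and the identity holds for every $f$ in a class rich enough to separate points (locally bounded, $C^{2}$ in ${\bf x}$), the bracket must vanish $d\Gamma$-almost everywhere; this is exactly the It\^o GSDE \eqref{Aydii}. The initial condition \eqref{Aydii1} is then forced by \eqref{Aydyad2}, \eqref{Aydyad3} and by the regularity that made the integrations by parts admissible. Reading the same chain of equalities backwards shows that, conversely, a solution of \eqref{Aydii}--\eqref{Aydii1} produces an invariant of the form \eqref{Aydyad1}, so the conditions are both necessary and sufficient.

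The part I expect to be delicate is not the formal computation but its justification: (i) interchanging $d_t$ with the spatial integral $\int_{\mathbb{R}^{n}}\cdot\,d\Gamma({\bf x})$ and applying a stochastic Fubini between the spatial integral and the $dw_{k}(t)$ and $\nu(dt;d\gamma)$ integrals, which needs the growth and integrability control provided by the boundedness of the coefficients and the $C_{0}^{2}$-decay of $\rho$; (ii) legitimizing the change of variables in the jump term, i.e. that ${\bf x}\mapsto{\bf x}+g(t;{\bf x};\gamma)$ is a diffeomorphism of $\mathbb{R}^{n}$ for each $\gamma$, which rests on the hypotheses on $g$; and (iii) passing from ``$\int f\,[\cdots]=0$ for all $f$'' to ``$[\cdots]=0$'' for a random integrand, which is done by the usual localization argument applied separately to the $dt$, the $dw_{k}(t)$ and the $\nu(dt;d\gamma)$ components.
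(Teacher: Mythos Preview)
Your proposal is correct and follows essentially the same route as the paper: differentiate \eqref{Aydyad1} in $t$, apply the generalized It\^o formula \eqref{Ayd2} on the right, use \eqref{Aydyad1} again to pass to the $\rho(t;{\bf x})$-weighted integral \eqref{Aydyad5}, then integrate by parts in the continuous terms (\eqref{Aydi10}--\eqref{Aydi30}) and change variables in the jump term (\eqref{Aydx1y}, \eqref{Aydy3}) to reach \eqref{Aydi}, and finally invoke the arbitrariness of $f$ to obtain \eqref{Aydii}. Your closing paragraph on the stochastic Fubini step, the diffeomorphism property of ${\bf x}\mapsto{\bf x}+g(t;{\bf x};\gamma)$, and the separation of the $dt$, $dw_k$, and $\nu$ components actually articulates justifications that the paper leaves implicit.
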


\section{The generalization of It\^{o}-Wentzell formula}

The equation (\ref{Aydyad1}) under conditions (\ref{Aydyad2}), \eqref{Aydyad3} and equation (\ref{Aydii})  for the integral invariant kernel conformed to a determinate function $f(t;{\bf x})$. Let's assume, that analogical equation for random function ${\bf
z}(t;{\bf x})$  holds:
\begin{equation}\label{Aydz3p}
\begin{array}{c}
  \displaystyle\int\limits_{\mathbb{R}^{n}}\rho(t;{\bf x}){\bf z}(t;{\bf
x})d\Gamma({\bf x})=
  \displaystyle\int\limits_{\mathbb{R}^{n}}\rho(0;{\bf y}){\bf
z}(t;{\bf x}(t;{\bf y}))d\Gamma({\bf y}),
\end{array}
\end{equation}
here  ${\bf x}(t;{\bf y})$ is a solution of (\ref{Ayd01.1}).

Let's consider a compound stochastic process ${\bf z}\left(t;{\bf x}(t;{\bf
y})\right)\in \mathbb{R}^{n_{o}}$, where ${\bf x}(t;{\bf y})$ --  is a solution of (\ref{Ayd01.1}) where the process ${\bf z}(t;{\bf
x})$ --  is  a solution of  the system:
\begin{equation}\label{Aydz1}
\begin{array}{c}
  d_{t}{\bf z}(t;{\bf x})=\Pi(t;{\bf x})dt+D_{k}(t;{\bf x})dw_{k}(t)
+
  \displaystyle\int\limits_{R(\gamma)}\nu(dt;d\gamma)G(t;{\bf x};\gamma).
  \end{array}
\end{equation}

As far as the random functions  $\Pi(t;{\bf x})$, $D_{k}(t;{\bf
x})$, $G(t;{\bf x};\gamma)$   defined  on $\mathbb{R}^{n_{o}}$ are concerned we can assume that they are continuous  and  bounded together with their own  derivatives with respect to all variables. These  functions are  measurable with respect to  $\sigma$-algebras flow  $\mathcal{F}_{t}$, which adapted with processes ${\bf w}(t)$  and $\nu(t;\Delta\gamma)$  from equation (\ref{Ayd01.1}).

Relying on the equation for the  stochastic integral invariant. We can construct   the differentiation law for the compound stochastic process  ${\bf z}\left(t;{\bf x}(t;{\bf
y})\right)$.

Let us consider  the integral   $
\displaystyle\int\limits_{\mathbb{R}^{n}}\rho(t;{\bf x}){\bf
z}(t;{\bf x})d\Gamma({\bf x})
$.  As we do integration over the  $\mathbb{R}^{n}$ and ${\bf x}\in \mathbb{R}^{n}$, we use equation  (\ref{Aydz3p}) and having changed the places of the equation, we get :
\begin{equation}\label{Aydz3}
\begin{array}{c}
  \displaystyle\int\limits_{\mathbb{R}^{n}}\rho(0;{\bf y}){\bf
z}(t;{\bf x}(t;{\bf y}))d\Gamma({\bf y})=
  \int\limits_{\mathbb{R}^{n}}\rho(t;{\bf x}){\bf z}(t;{\bf
x})d\Gamma({\bf x}).
\end{array}
\end{equation}
Lrt's differentiate both parts of (\ref{Aydz3}) with respect to $t$:
\begin{equation}\label{Aydz4}
\begin{array}{c}
  \displaystyle\int\limits_{\mathbb{R}^{n}}\rho(0;{\bf y})d_{t}{\bf
z}(t;{\bf x}(t;{\bf y}))d\Gamma({\bf
y})= \\
 \displaystyle\int\limits_{\mathbb{R}^{n}}\Bigl(\Bigr.\rho(t;{\bf x})d_{t}{\bf
z}(t;{\bf x})+{\bf z}(t;{\bf x})d_{t}\rho(t;{\bf x}) -
D_{k}(t;{\bf x}) \displaystyle\frac{\partial\rho(t;{\bf
x})b_{i\,k}(t;{\bf x}) }{\partial x_{i}}dt\Bigl.\Bigr)d\Gamma({\bf
x}) ,
\end{array}
\end{equation}

Taking into account that integration is carried out over the $\mathbb{R}^{n}$ and the variable is changed we should keep in ind this substitution (\ref{Aydx1y}) when integration over the curve  $R(\gamma)$.

As the function $\rho(t;{\bf x})$ --  is the integral invariant kernel \eqref{Aydz3p}  we apply the  theorem~\ref{thro1}.  Substitute (\ref{Aydii}) and  (\ref{Aydz1}) to the right  part of (\ref{Aydz4}):
$$
I_{1}= \displaystyle \int\limits_{\mathbb{R}^{n}}\Bigl(\rho(t;{\bf
x})d_{t}{\bf z}(t;{\bf x})+{\bf z}(t;{\bf x})d_{t}\rho(t;{\bf x})
-D_{k}(t;{\bf x}) \displaystyle\frac{\partial\rho(t;{\bf
x})b_{i\,k}(t;{\bf x})
}{\partial x_{i}}dt\Bigr)d\Gamma({\bf x})=
$$
$$  = \displaystyle\int\limits_{\mathbb{R}^{n}}d\Gamma({\bf x})\biggl\{\rho(t;{\bf x})
  \Bigl[\Bigr.\Pi(t;{\bf x})dt+D_{k}(t;{\bf x})dw_{k}(t)
+
$$
$$
+\displaystyle\int\limits_{R(\gamma)}G(t;{\bf x}+g(t;{\bf
x}^{-1}(t;{\bf
x};\gamma);\gamma)\nu(dt;d\gamma)\Bigl.\Bigr]+
$$
$$
+{\bf z}(t;{\bf x})\biggl[-\displaystyle\frac{\partial\rho(t;{\bf
x})b_{i\,k}(t;{\bf x}) }{\partial
x_{i}}dw_{k}(t)+
$$
$$
+\Bigr.\displaystyle\Bigl(\Bigr.-\displaystyle\frac{\partial
\left( \rho(t;{\bf x})a_{i}(t;{\bf x})\right) }{\partial
x_{i}}+
\displaystyle\frac{1}{2}\frac{\partial^{\,2} \left(\rho(t;{\bf
x})b_{i\,k}(t;{\bf x})b_{j\,k}(t;{\bf x})\right) }{\partial x_{i} \partial x_{j}}\Bigl.\Bigr)dt +
$$
$$
 +\displaystyle\int\limits_{\mathbb{R}(\gamma)} \Bigl[\Bigr.\rho\left(t;{\bf
x}-g(t;{\bf x}^{-1}(t;{\bf x};\gamma);\gamma)\right)
 \cdot  D\left( {\bf x}^{-1}(t;{\bf x};\gamma)  \right)-\rho(t;{\bf
 x})\Bigl.\Bigr]\nu(dt;d\gamma)\Bigl.\Bigl.\biggr]-
 $$
 $$
 -D_{k}(t;{\bf
x}) \displaystyle\frac{\partial\rho(t;{\bf x})b_{i\,k}(t;{\bf x})
}{\partial x_{i}}dt\biggr\}.
$$
Collect similar terms:
\begin{equation}\label{Aydz5}
\begin{array}{c}
I_{1}=\displaystyle\int\limits_{\mathbb{R}^{n}}d\Gamma({\bf
x})\biggl(\rho(t;{\bf x})\Pi(t;{\bf x}) -{\bf z}(t;{\bf
x})\displaystyle\frac{\partial \left( \rho(t;{\bf x})a_{i}(t;{\bf
x})\right) }{\partial x_{i}}+\biggr.\\
-D_{k}(t;{\bf x}) \displaystyle\frac{\partial\rho(t;{\bf
x})b_{i\,k}(t;{\bf x}) }{\partial x_{i}}
+\displaystyle\frac{1}{2}{\bf z}(t;{\bf x})\frac{\partial^{\,2}
\left(\rho(t;{\bf x})b_{i\,k}(t;{\bf x})b_{j\,k}(t;{\bf x})\right)
}{\partial x_{i}
\partial x_{j}}\biggl.\biggr)dt+\\
+\displaystyle\int\limits_{\mathbb{R}^{n}}d\Gamma({\bf
x})\biggl(\rho(t;{\bf x})D_{k}(t;{\bf x}) -{\bf z}(t;{\bf
x})\displaystyle\frac{\partial \left( \rho(t;{\bf
x})b_{i\,k}(t;{\bf x})\right) }{\partial
x_{i}}\biggr)dw_{k}(t)+\\
+\displaystyle\int\limits_{\mathbb{R}^{n}}d\Gamma({\bf
x})\biggl[\rho(t;{\bf x}) \cdot\int\limits_{R(\gamma)}G(t;{\bf
x}+g(t;{\bf x}^{-1}(t;{\bf
x};\gamma);\gamma))\nu(dt;d\gamma) +\Bigr.\\
+{\bf z}(t;{\bf
x})\displaystyle\int\limits_{R(\gamma)}\Bigl[\rho\left(t;{\bf
x}-g(t;{\bf x}^{-1}(t;{\bf x};\gamma);\gamma)\right)
 \cdot D\left( {\bf x}^{-1}(t;{\bf x};\gamma)  \right)-\rho(t;{\bf
 x})\Bigr]\nu(dt;d\gamma)\Bigl.\biggr].
\end{array}
\end{equation}
 And because of (\ref{Aydi10}), (\ref{Aydi20}), (\ref{Aydi30})  we have:
\begin{equation*}%\label{Ayde1}
\begin{array}{c}
\displaystyle\int\limits_{\mathbb{R}^{n}}d\Gamma({\bf x}){\bf
z}(t;{\bf x})\frac{\partial \left(\rho(t;{\bf x})a_{i}(t;{\bf
x})\right) }{\partial x_{i}} =
-\displaystyle\int\limits_{\mathbb{R}^{n}}d\Gamma({\bf
x})\rho(t;{\bf x})a_{i}(t;{\bf x})\displaystyle\frac{\partial
 {\bf z}(t;{\bf x})
 }{\partial x_{i}},
\end{array}
\end{equation*}
\begin{equation*}%\label{Ayde2}
\begin{array}{c}
  \displaystyle\int\limits_{\mathbb{R}^{n}}d\Gamma({\bf
x})D_{k}(t;{\bf x})\frac{\partial\left(\rho(t;{\bf
x})b_{i\,k}(t;{\bf x})\right) }{\partial x_{i}}=
  -\displaystyle\int\limits_{\mathbb{R}^{n}}d\Gamma({\bf
x})\rho(t;{\bf x})b_{i\,k}(t;{\bf x})\frac{\partial D_{k}(t;{\bf
x}) }{\partial x_{i}},
\end{array}
\end{equation*}
\begin{equation*}%\label{Ayde3}
\begin{array}{c}
\displaystyle\int\limits_{\mathbb{R}^{n}}d\Gamma({\bf x}){\bf
z}(t;{\bf x})
 \frac{\partial^{\,2} \left(\rho(t;{\bf x})b_{i\,k}(t;{\bf x})b_{j\,k}(t;{\bf x})\right) }{\partial x_{i} \partial
 x_{j}}=
  \\
 = \displaystyle\int\limits_{\mathbb{R}^{n}}d\Gamma({\bf
x})\rho(t;{\bf x})b_{i\,k}(t;{\bf x})b_{j\,k}(t;{\bf x})
 \frac{\partial^{\,2} {\bf z}(t;{\bf x}) }{\partial x_{i} \partial
 x_{j}}.
\end{array}
\end{equation*}
Calculate the last integral in (\ref{Aydz5}):
\begin{equation}\label{Ayde6}
\begin{array}{c}
  I_{2}=\displaystyle\int\limits_{\mathbb{R}^{n}}d\Gamma({\bf
x}){\bf z}(t;{\bf x})
  \cdot\rho\Bigl(t;{\bf x}-g(t;{\bf x}^{-1}(t;{\bf
x};\gamma);\gamma)\Bigr)D({\bf x}^{-1}(t;{\bf x};\gamma)).
\end{array}
\end{equation}
Let's introduce the variables' substitution:
$$
\begin{array}{c}
  {\bf x}-g(t;{\bf x}^{-1}(t;{\bf x};\gamma);\gamma)={\bf y}, \\
  {\bf x}={\bf y}+g(t;{\bf x}^{-1}(t;{\bf x};\gamma);\gamma)={\bf y}+g(t;{\bf
  y};\gamma).
\end{array}
$$
Let  $D_{o}({\bf x}^{-1}(t;{\bf y};\gamma))$  be a Jacobian of transformation from  the vector ${\bf x}$  to the vector ${\bf
y}$. Then, by  virtue of (\ref{Aydy1}) and further formal substitution of the integration variable's notation, we get:
\begin{equation}\label{Ayde7}
\begin{array}{c}
  I_{2}=\displaystyle\int\limits_{\mathbb{R}^{n}}d\Gamma({\bf y}){\bf
z}\Bigl(t;{\bf y}+g(t;{\bf
  y};\gamma)\Bigr)
  \cdot\rho(t;{\bf y})D_{o}({\bf x}^{-1}(t;{\bf y};\gamma))D({\bf
x}^{-1}(t;{\bf x};\gamma))= \\
 =\displaystyle\int\limits_{\mathbb{R}^{n}}d\Gamma({\bf y}){\bf
z}\Bigl(t;{\bf y}+g(t;{\bf
  y};\gamma)\Bigr)\rho(t;{\bf y})
  =\displaystyle\int\limits_{\mathbb{R}^{n}}d\Gamma({\bf x}){\bf
z}\Bigl(t;{\bf x}+g(t;{\bf
  x};\gamma)\Bigr)\rho(t;{\bf x}).
\end{array}
\end{equation}
 As a result, the right part of (\ref{Aydz4})  becomes:
\begin{equation}\label{Ayde8}
\begin{array}{c}
I_{1}=\displaystyle\int\limits_{\mathbb{R}^{n}}d\Gamma({\bf
x})\rho(t;{\bf x})\biggl\{\Bigl( D_{k}(t;{\bf x}) +b_{i\,k}(t;{\bf
x})\frac{\partial {\bf z}(t;{\bf x}) }{\partial x_{i}}
\Bigr)dw_{k}(t)+\biggr.\\+
 \Bigl(\Pi(t;{\bf x})+a_{i}(t;{\bf
x})\displaystyle\frac{\partial
 {\bf z}(t;{\bf x})
 }{\partial x_{i}}+ b_{i\,k}(t;{\bf
x})\frac{\partial
 D_{k}(t;{\bf x})
 }{\partial x_{i}}+\\
 +\displaystyle\frac{1}{2}b_{i\,k}(t;{\bf x})b_{j\,k}(t;{\bf x})
 \frac{\partial^{\,2} {\bf z}(t;{\bf x}) }{\partial x_{i} \partial
 x_{j}}\Bigr)dt+\\
 +\displaystyle\int\limits_{R(\gamma)}G(t;{\bf x}+g(t;{\bf
x}^{-1}(t;{\bf x};\gamma);\gamma)\nu(dt;d\gamma)+\\
+
 \displaystyle\int\limits_{R(\gamma)}\Bigl[{\bf z}\Bigl(t;{\bf x}+g(t;{\bf
  x};\gamma)\Bigr)
  -{\bf z}(t;{\bf
 x})\Bigr]\nu(dt;d\gamma)\biggl.\biggr\}.
 \end{array}
\end{equation}
In equation (\ref{Aydz4}) we transfer all the terms to the right part. And having taken  (\ref{Aydyad1}) into account, we have:
\begin{equation*}%\label{Ayde9}
\begin{array}{c}
0=\displaystyle\int\limits_{\mathbb{R}^{n}}d\Gamma({\bf
y})\rho(0;{\bf y})\biggl\{-d_{t}{\bf z}(t;{\bf x}(t;{\bf
y}))+\\
+\biggr.\Bigl( D_{k}(t;{\bf x}(t;{\bf y}))+b_{i\,k}(t;{\bf x}(t;{\bf
y}))\displaystyle\frac{\partial {\bf z}(t;{\bf
x}) }{\partial x_{i}} \Bigr)dw_{k}(t)+\\
+ \Bigl(\Pi(t;{\bf x}(t;{\bf y}))+a_{i}(t;{\bf
x})\displaystyle\frac{\partial
 {\bf z}(t;{\bf x})
 }{\partial x_{i}}
 +b_{i\,k}(t;{\bf
x})\frac{\partial
 D_{k}(t;{\bf x})
 }{\partial x_{i}}+\Bigr.\\
 +\Bigl.\displaystyle\frac{1}{2}b_{i\,k}(t;{\bf x}(t;{\bf y}))b_{j\,k}(t;{\bf x}(t;{\bf y}))
 \frac{\partial^{\,2} {\bf z}(t;{\bf x}(t;{\bf y})) }{\partial x_{i} \partial
 x_{j}}\Bigr)dt+\\
 +\displaystyle\int\limits_{R(\gamma)}G(t;{\bf x}(t;{\bf y})+g(t;{\bf
  x}(t;{\bf y});\gamma)\nu(dt;d\gamma)+\\
 +
 \displaystyle\int\limits_{R(\gamma)}\Bigl[{\bf z}\Bigl(t;{\bf x}(t;{\bf y})+g(t;{\bf
  x}(t;{\bf y});\gamma)\Bigr)-
{\bf z}(t;{\bf
 x}(t;{\bf y}))\Bigr]\nu(dt;d\gamma)\biggl.\biggr\}.
 \end{array}
\end{equation*}
Therefore,  we'll have the differentiation law for the compound stochastic process as  the theorem.
\begin{theorem}\label{thro2}
Let  ${\bf x}(t;{\bf y})\in\mathbb{R}^{n} $  is the solution of SDE system  \eqref{Ayd01.1},  ${\bf z}(t;{\bf
x})$ is  the solution of SDE system \eqref{Aydz1} and  ${\bf z}\left(t;{\bf x}(t;{\bf
y})\right)\in \mathbb{R}^{n_{o}}$  is the  random process. The random functions that are coefficients of equations   \eqref{Ayd01.1} and \eqref{Aydz1}, which are defined on  $\mathbb{R}^{n_{o}}$ and  $\mathbb{R}^{n}$ respectively. This functions  are continuous  and  bounded together with their  own  derivatives respect to all variables. Also these  functions are measurable respect to  $\sigma$-algebras flow  $\mathcal{F}_{t}$, which is adapted with processes  ${\bf w}(t)$  and  $\nu(t;\Delta\gamma)$  from equation \eqref{Ayd01.1}. Then the compound stochastic process ${\bf z}\left(t;{\bf x}(t;{\bf
y})\right)$  is the solution of SDE system:
\begin{equation}\label{Ayde9}
\begin{array}{c}
d_{t}{\bf z}(t;{\bf x}(t;{\bf y}))=\displaystyle\Bigl(
D_{k}(t;{\bf x}(t;{\bf y})) +b_{i\,k}(t;{\bf x}(t;{\bf
y}))\frac{\partial {\bf z}(t;{\bf
x}(t;{\bf y})) }{\partial x_{i}} \Bigr)dw_{k}(t)+\\
 +\Bigl(\Pi(t;{\bf x}(t;{\bf y}))+a_{i}(t;{\bf
x}(t;{\bf y}))\displaystyle\frac{\partial
 {\bf z}(t;{\bf x}(t;{\bf y}))
 }{\partial x_{i}}
 +b_{i\,k}(t;{\bf
x}(t;{\bf y}))\displaystyle\frac{\partial
 D_{k}(t;{\bf x}(t;{\bf y}))
 }{\partial x_{i}}+\Bigr.\\
 +\Bigl.\displaystyle\frac{1}{2}b_{i\,k}(t;{\bf x}(t;{\bf y}))b_{j\,k}(t;{\bf x}(t;{\bf y}))
 \frac{\partial^{\,2} {\bf z}(t;{\bf x}(t;{\bf y})) }{\partial x_{i} \partial
 x_{j}}\Bigr)dt+\\
 +\displaystyle\int\limits_{R(\gamma)}G(t;{\bf x}(t;{\bf y})+g(t;{\bf
  x}(t;{\bf y});\gamma)\nu(dt;d\gamma)+\\
 +
 \displaystyle\int\limits_{R(\gamma)}\Bigl[{\bf z}\Bigl(t;{\bf x}(t;{\bf y})+g(t;{\bf
  x}(t;{\bf y});\gamma)\Bigr)
  -{\bf z}(t;{\bf
 x}(t;{\bf y}))\Bigr]\nu(dt;d\gamma).
 \end{array}
\end{equation}
\end{theorem}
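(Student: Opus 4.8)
The plan is to derive \eqref{Ayde9} from the theory of the stochastic integral invariant developed above, rather than by a direct change of variables in \eqref{Ayd01.1}. First I would attach to the diffusion ${\bf x}(t;{\bf y})$ a stochastic kernel $\rho(t;{\bf x})$ satisfying the hypotheses of Theorem~\ref{thro1}, so that \eqref{Aydz3p} holds for the compound process ${\bf z}(t;{\bf x}(t;{\bf y}))$; concretely,
$$\int_{\mathbb{R}^{n}}\rho(0;{\bf y}){\bf z}(t;{\bf x}(t;{\bf y}))\,d\Gamma({\bf y})=\int_{\mathbb{R}^{n}}\rho(t;{\bf x}){\bf z}(t;{\bf x})\,d\Gamma({\bf x}).$$
Differentiating both sides in $t$: the left-hand side yields exactly $\int\rho(0;{\bf y})\,d_{t}{\bf z}(t;{\bf x}(t;{\bf y}))\,d\Gamma({\bf y})$, which is the quantity to be identified, while on the right-hand side the It\^o product rule gives $d_{t}(\rho{\bf z})=\rho\,d_{t}{\bf z}+{\bf z}\,d_{t}\rho+d\langle\rho,{\bf z}\rangle$, whose continuous cross-variation contributes the term $-D_{k}(t;{\bf x})\,\partial_{x_{i}}\bigl(\rho(t;{\bf x})b_{i\,k}(t;{\bf x})\bigr)\,dt$ coming from the $dw_{k}$-parts of \eqref{Aydii} and \eqref{Aydz1}.

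Next I would substitute $d_{t}\rho$ from the invariant-kernel equation \eqref{Aydii} and $d_{t}{\bf z}$ from \eqref{Aydz1}, and then transfer every spatial derivative off $\rho$ and off the coefficients $a_{i},b_{i\,k}$ onto ${\bf z}$ by the integration-by-parts identities \eqref{Aydi10}, \eqref{Aydi20}, \eqref{Aydi30}, the decay hypotheses \eqref{Aydyad3}, \eqref{Aydii1} killing the boundary contributions. The Poisson terms are treated by the nonlinear substitutions $\,{\bf x}+g(t;{\bf x};\gamma)={\bf y}\,$ and its counterpart $\,{\bf x}-g(t;{\bf x}^{-1}(t;{\bf x};\gamma);\gamma)={\bf y}\,$, performed as in \eqref{Aydy1}--\eqref{Aydy3} and \eqref{Ayde6}--\eqref{Ayde7}; the crucial cancellation is that the Jacobian $D({\bf x}^{-1})$ produced inside the $\rho$-equation is inverse to the Jacobian $D_{o}({\bf x}^{-1})$ of the substitution, so the jump contributions collapse to $\int_{R(\gamma)}\bigl[{\bf z}(t;{\bf x}+g(t;{\bf x};\gamma))-{\bf z}(t;{\bf x})\bigr]\nu(dt;d\gamma)$ together with the $G$-term evaluated along the shifted argument. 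After collecting like terms the right-hand side takes the form $\int\rho(t;{\bf x})\{\,\cdots\,\}\,d\Gamma({\bf x})$ with the braces equal to the right-hand side of \eqref{Ayde9} written in the variable ${\bf x}$; invoking \eqref{Aydz3p} once more converts this to $\int\rho(0;{\bf y})\{\,\cdots\,\}_{{\bf x}={\bf x}(t;{\bf y})}\,d\Gamma({\bf y})$.

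Equating the two expressions and moving all terms under a single integral gives
$$0=\int_{\mathbb{R}^{n}}\rho(0;{\bf y})\Bigl\{-d_{t}{\bf z}(t;{\bf x}(t;{\bf y}))+\bigl[\text{right-hand side of }\eqref{Ayde9}\bigr]\Bigr\}\,d\Gamma({\bf y}),$$
and since the initial density $\rho(0;\cdot)$ may be chosen as an arbitrary member of $C_{0}^{2}$ subject to \eqref{Aydyad2}, \eqref{Aydii1} — in particular as an approximate identity concentrated near any prescribed point — the bracketed expression must vanish identically, which is precisely \eqref{Ayde9}.

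I expect the main obstacle to be the accounting of the jump terms: one must keep straight that ${\bf z}(t;{\bf x}(t;{\bf y}))$ jumps by ${\bf z}(t;{\bf x}+g)-{\bf z}(t;{\bf x})$ whereas ${\bf z}(t;{\bf x})$ itself jumps by $G$, and one must verify that the forward shift by $g$ and the backward shift by $g\circ{\bf x}^{-1}$ are genuine inverses so that their Jacobians multiply to $1$; a secondary point needing care is the passage from the integrated identity to the pointwise system \eqref{Ayde9}, i.e. that the class of admissible kernels $\rho$ separates points.
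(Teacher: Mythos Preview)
Your proposal is correct and follows essentially the same route as the paper: differentiate the invariant-kernel identity \eqref{Aydz3p} in $t$, expand the right side via the It\^o product rule (yielding the cross-variation term $-D_{k}\,\partial_{x_{i}}(\rho\,b_{i\,k})\,dt$), substitute \eqref{Aydii} and \eqref{Aydz1}, shift all spatial derivatives onto ${\bf z}$ via \eqref{Aydi10}--\eqref{Aydi30}, handle the jump integrals by the change of variables and Jacobian cancellation of \eqref{Ayde6}--\eqref{Ayde7}, and finally pull the $\rho(t;{\bf x})$-weight back to $\rho(0;{\bf y})$ via \eqref{Aydz3p} to conclude by arbitrariness of $\rho(0;\cdot)$. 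Your remark that the final step requires the class of admissible kernels to separate points is a point the paper leaves implicit, but your approximate-identity argument is the natural justification.
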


On the analogy of the well-known terminology let's call the formula (\ref{Ayde9}) as generalized It\^{o}-Wentzell  formula for the generalized SDE It\^{o} of non-centered  Poisson measure.
In addition we can settle one more statement.
\begin{prp}\label{thro3}
 If ${\bf z}(t;{\bf x}(t;{\bf y}))$ --  is the solution for equation \eqref{Ayde9} meeting the initial condition
\begin{equation*}
{\bf z}(t;{\bf x}(t;{\bf y}))\Bigr|_{t=0}={\bf z}(0;{\bf y}), \ \
\ {\bf z}(0;{\bf y})\in C_{o}^{2},
\end{equation*}
then the random function $\rho(t;{\bf x})$,  which is a function that measurable with   respect to $\sigma$-algebras flow  $\Bigl\{\mathcal{F}\Bigr\}_{0}^{T}$,
$\mathcal{F}_{t_{1}}\subset\mathcal{F}_{t_{2}}$, $t_{1}<t_{2}$, that is adapted with processes ${\bf w} (t)$  and   $\nu(t;\Delta\gamma)$, is the stochastic kernel of the stochastic integral invariant \eqref{Aydz3p}.
\end{prp}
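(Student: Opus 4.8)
The plan is to read this as the converse of Theorem~\ref{thro2} and to run the computation of its proof backwards. In Theorem~\ref{thro2} one starts from the invariance \eqref{Aydz3p}, differentiates in $t$, applies the It\^o product rule to $\rho(t;{\bf x}){\bf z}(t;{\bf x})$ together with the kernel equation \eqref{Aydii} and the equation \eqref{Aydz1} for ${\bf z}$, and --- via the integrations by parts \eqref{Aydi10}, \eqref{Aydi20}, \eqref{Aydi30} and the changes of variables \eqref{Aydy1}--\eqref{Aydy2}, \eqref{Ayde6}--\eqref{Ayde7} in the Poisson integrals --- arrives at \eqref{Ayde9}. Every intermediate link in that chain is an identity, so the route here is: assume \eqref{Ayde9} with the prescribed $C_o^{2}$ initial data (keeping the standing hypothesis that the measurable, adapted function $\rho$ is the stochastic kernel of the integral invariant \eqref{Aydyad1}, hence by Theorem~\ref{thro1} solves \eqref{Aydii} with \eqref{Aydii1}), and deduce that the two sides of \eqref{Aydz3p} coincide.

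Write $L(t)=\int_{\mathbb{R}^{n}}\rho(t;{\bf x}){\bf z}(t;{\bf x})\,d\Gamma({\bf x})$ and $R(t)=\int_{\mathbb{R}^{n}}\rho(0;{\bf y}){\bf z}(t;{\bf x}(t;{\bf y}))\,d\Gamma({\bf y})$; it suffices to prove $L\equiv R$ on $[0,T]$. First, $L(0)=R(0)$, since ${\bf x}(0;{\bf y})={\bf y}$ and the prescribed initial condition gives ${\bf z}(0;{\bf x}(0;{\bf y}))={\bf z}(0;{\bf y})$, so both integrals equal $\int_{\mathbb{R}^{n}}\rho(0;{\bf y}){\bf z}(0;{\bf y})\,d\Gamma({\bf y})$. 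Second, $d_{t}L=d_{t}R$. Differentiating $R$ under the integral sign and invoking the hypothesis \eqref{Ayde9} gives $d_{t}R=\int_{\mathbb{R}^{n}}\rho(0;{\bf y})\,\{\text{right-hand side of }\eqref{Ayde9}\}\,d\Gamma({\bf y})$. Differentiating $L$ amounts to applying the It\^o product rule to $\rho(t;{\bf x}){\bf z}(t;{\bf x})$ and substituting the kernel equation \eqref{Aydii} for $d_{t}\rho$ and \eqref{Aydz1} for $d_{t}{\bf z}$; this is exactly the computation that produced the quantity $I_{1}$ in \eqref{Aydz5}, and after the integrations by parts \eqref{Aydi10}, \eqref{Aydi20}, \eqref{Aydi30} and the substitution \eqref{Ayde6}--\eqref{Ayde7} it reaches the form displayed in \eqref{Ayde8}. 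By the transfer from an ${\bf x}$-integral to a ${\bf y}$-integral effected through the invariance \eqref{Aydyad1} --- precisely the last step in the proof of Theorem~\ref{thro2} --- that form equals $\int_{\mathbb{R}^{n}}\rho(0;{\bf y})\,\{\text{right-hand side of }\eqref{Ayde9}\}\,d\Gamma({\bf y})$. Hence $d_{t}L=d_{t}R$; together with $L(0)=R(0)$, integration in $t$ yields $L\equiv R$, which is \eqref{Aydz3p}, and with the measurability and adaptedness of $\rho$ this is precisely the assertion that $\rho$ is the stochastic kernel of the stochastic integral invariant \eqref{Aydz3p} for the process ${\bf z}$.

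The delicate parts are the bookkeeping inside the Poisson integrals and the legitimacy of running the whole chain in reverse. One must recheck, now in the opposite direction, that under the substitution ${\bf x}\mapsto{\bf x}+g(t;{\bf x};\gamma)$ and its inverse ${\bf x}^{-1}(t;\cdot\,;\gamma)$ the Jacobians $D({\bf x}^{-1}(t;{\bf x};\gamma))$ and $D_{o}({\bf x}^{-1}(t;{\bf y};\gamma))$ cancel, as in \eqref{Ayde7}, so that the jump contributions in \eqref{Ayde8} reassemble into the last integral of \eqref{Ayde9}; and that all boundary terms in the integrations by parts vanish, which follows from the decay conditions \eqref{Aydii1} on $\rho$ together with the local boundedness of ${\bf z}$, $D_{k}$, $\Pi$, $G$ and their first and second derivatives. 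The main conceptual obstacle is the term-by-term transfer between the ${\bf x}$-integral and the ${\bf y}$-integral for the \emph{random} integrand built from ${\bf z}$ and the coefficients: this requires reading the invariance \eqref{Aydyad1} pathwise, as a change-of-variables identity valid for every fixed measurable function of ${\bf x}$ (in particular, for the realizations obtained by fixing $\omega$), which is exactly what the equation \eqref{Aydii} for $\rho$ provides. Once these points are settled, the argument is the proof of Theorem~\ref{thro2} read backwards.
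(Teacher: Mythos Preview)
The paper does not supply a proof of this proposition: it is stated immediately after Theorem~\ref{thro2} as an additional observation, and the text then moves on to Section~\ref{PintSt}. So there is no ``paper's own proof'' to compare against.

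Your approach is the natural one and is essentially what the paper's layout suggests: read the proposition as the converse of Theorem~\ref{thro2} and reverse the chain of identities in that proof. Since every step from \eqref{Aydz4} through \eqref{Ayde8} is an equality (It\^o product rule, the kernel equation \eqref{Aydii}, the integrations by parts \eqref{Aydi10}--\eqref{Aydi30}, the Poisson change of variables \eqref{Ayde6}--\eqref{Ayde7}), the derivation runs in either direction, and matching the initial data via ${\bf z}(0;{\bf y})\in C_o^{2}$ pins down $L(0)=R(0)$. Your identification of the one genuinely delicate point --- that the transfer from the ${\bf x}$-integral \eqref{Ayde8} to the ${\bf y}$-integral uses \eqref{Aydyad1} applied to \emph{random} integrands, hence must be read pathwise --- is accurate, and it is the same tacit step the paper makes in the forward direction. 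With that caveat noted, your argument is sound.
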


\section{The stochastic first integral}\label{PintSt}
In the article \cite{D_78} the concept of the first integral for It\^{o}'s SDE  system (without Poisson part) has been introduced. In the article \cite{D_02} the concept of the stochastic first integral for It\^{o}'s GSDE  system with  centered measure has been given. Let's introduce the similar concept for the non-centered  Poisson measure.
\begin{dfn}\label{Ayddf1}
Let  random function  $u(t;{\bf x};\omega)$   and   solution ${\bf x}(t)$ of the It\^{o}'s GSDE  system \eqref{Ayd01.1} are defined  on the same    probability space. The  function  $u(t;{\bf x};\omega)$  is called the stochastic first integral for It\^{o}'s GSDE  system \eqref{Ayd01.1} with  non centered Poissin measure, if condition
$$
u\Bigl(t;{\bf x}(t; {\bf x}(0));\omega\Bigr)=u\Bigl(0;{\bf
x}(0);\omega\Bigr)
$$
is held with probability is equaled to 1 for each solution  ${\bf x}(t;{\bf x}(0);\omega)$  of the system \eqref{Ayd01.1}.
\end{dfn}

We set conditions for the function $u(t;{\bf x};\omega)$  which imply that   $u(t;{\bf x};\omega)$  is the stochastic first integral for It\^{o}'s GSDE  system \eqref{Ayd01.1}.
\begin{lemma}\label{l1}
If the function $\rho(t;{\bf x})$  is the stochastic kernel of the  integral invariant of order $n$ of the stochastic process ${\bf x}(t)$ outgoing from a random point ${\bf x}(0)={\bf y}$, then for any $t$ it complies with the equation $\rho\Bigl(t;{\bf x}(t;{\bf y})\Bigr)\textit{J}(t;{\bf y})=\rho(0;{\bf y})$, where $\textit{J}(t;{\bf y})=\textit{J}(t;{\bf x}(0))$  is  a transition Jacobian from    ${\bf x}(t)$ to ${\bf x}(0)={\bf y}$.
\end{lemma}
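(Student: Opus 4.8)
The plan is to read the asserted pointwise identity directly off the defining integral relation \eqref{Aydyad1} by performing in it the change of variables associated with the stochastic flow. Fix $t$ and $\omega$. In the left-hand side $\int_{\mathbb{R}^{n}}\rho(t;{\bf x})f(t;{\bf x})\,d\Gamma({\bf x})$ I would substitute ${\bf x}={\bf x}(t;{\bf y})$. Since the random variables $\textit{J}_{i,j}(t)$ solving \eqref{Ayd2.2} are exactly the entries $\partial x_{i}(t;{\bf y})/\partial y_{j}$ of the Jacobian matrix of the map ${\bf y}\mapsto{\bf x}(t;{\bf y})$, the volume element transforms as $d\Gamma({\bf x})=\textit{J}(t;{\bf y})\,d\Gamma({\bf y})$ with $\textit{J}(t;{\bf y})=\det(\textit{J}_{i,j}(t))$ as in Definition~\ref{Aydo2}, so that
\[
\int_{\mathbb{R}^{n}}\rho(t;{\bf x})f(t;{\bf x})\,d\Gamma({\bf x})
=\int_{\mathbb{R}^{n}}\rho\bigl(t;{\bf x}(t;{\bf y})\bigr)\,f\bigl(t;{\bf x}(t;{\bf y})\bigr)\,\textit{J}(t;{\bf y})\,d\Gamma({\bf y}).
\]

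Equating this with the right-hand side of \eqref{Aydyad1}, which is already an integral in ${\bf y}$, gives
\[
\int_{\mathbb{R}^{n}}\Bigl[\rho\bigl(t;{\bf x}(t;{\bf y})\bigr)\,\textit{J}(t;{\bf y})-\rho(0;{\bf y})\Bigr]\,f\bigl(t;{\bf x}(t;{\bf y})\bigr)\,d\Gamma({\bf y})=0 .
\]
Because \eqref{Aydyad1} holds for every $f$ in the class $\mathfrak{S}$ of locally bounded functions with bounded second ${\bf x}$-derivatives, and because under the smoothness and boundedness hypotheses on $a$, $b$, $g$ the map ${\bf y}\mapsto{\bf x}(t;{\bf y})$ is (a.s.) a bijection of $\mathbb{R}^{n}$ with $\textit{J}(t;{\bf y})\neq0$, the functions ${\bf y}\mapsto f\bigl(t;{\bf x}(t;{\bf y})\bigr)$ run over a class large enough (in fact all of $\mathfrak{S}$, precomposed with a fixed diffeomorphism) to apply the du Bois-Reymond (fundamental) lemma, which forces the bracket to vanish for Lebesgue-a.e.\ ${\bf y}$. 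By continuity of $\rho(0;\cdot)$, of $\rho(t;\cdot)$, of the flow ${\bf x}(t;\cdot)$ and of $\textit{J}(t;\cdot)$, the equality $\rho\bigl(t;{\bf x}(t;{\bf y})\bigr)\textit{J}(t;{\bf y})=\rho(0;{\bf y})$ holds for all ${\bf y}$ with probability $1$, which is the claim; since the integration in \eqref{Aydyad1} is over all of $\mathbb{R}^{n}$, this is precisely the integral invariant ``of order $n$''.

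The genuinely delicate points are not the algebra but the two measure-theoretic facts invoked above. The first is that the pathwise change of variables for the flow of \eqref{Ayd01.1} is legitimate with Jacobian exactly $\textit{J}(t;{\bf y})$: this needs the flow to be a global $C^{1}$-diffeomorphism of $\mathbb{R}^{n}$ for a.e.\ $\omega$, and the jump term $\int g_{i}(t;{\bf x};\gamma)\,\nu(dt;d\gamma)$ requires extra care, since one must know that each jump map ${\bf x}\mapsto{\bf x}+g(t;{\bf x};\gamma)$ is invertible — which is exactly what was already tacitly assumed when the inverse ${\bf x}^{-1}(t;{\bf y};\gamma)$ appeared in \eqref{Aydy2}. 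The second is that the image class $\{f(t;{\bf x}(t;\cdot)):f\in\mathfrak{S}\}$ is rich enough for the variational step, which follows from the same bijectivity. Both are consequences of the standing assumptions of the paper, so the proof reduces to the single change of variables displayed above together with an appeal to these facts.
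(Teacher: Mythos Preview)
Your argument is correct and, in fact, more complete than the one printed in the paper. The paper's proof invokes only the normalized identity \eqref{usl-inv} (the case $f\equiv1$ of \eqref{Aydyad1}), performs the same change of variables ${\bf x}={\bf x}(t;{\bf y})$ that you do, and simply displays
\[
1=\int_{\mathbb{R}^{n}}\rho(0;{\bf y})\,d\Gamma({\bf y})
=\int_{\mathbb{R}^{n}}\rho(t;{\bf x})\,d\Gamma({\bf x})
=\int_{\mathbb{R}^{n}}\rho\bigl(t;{\bf x}(t;{\bf y})\bigr)\textit{J}(t;{\bf y})\,d\Gamma({\bf y}),
\]
leaving the passage from equality of these global integrals to the pointwise relation $\rho(t;{\bf x}(t;{\bf y}))\textit{J}(t;{\bf y})=\rho(0;{\bf y})$ implicit. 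Your choice to work instead with the full relation \eqref{Aydyad1} and vary $f$ over $\mathfrak{S}$ is precisely what supplies this missing step rigorously, via the du~Bois-Reymond lemma. The underlying mechanism (the flow change of variables with Jacobian $\textit{J}$) is identical in both proofs; what you add is the clean variational justification for the pointwise conclusion, together with an explicit accounting of the regularity needed for the jump part, which the paper assumes silently.
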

\begin{proof}
Let's proceed to the equation \eqref{usl-inv}. If we substitute the  variables of the integral we'll get the integration is fulfilled according one and the same random volume:
\begin{equation*}%\label{usl-inv-1}
1=\int\limits_{\mathbb{R}^{n}}\rho(0;{\bf
y})d\Gamma({\bf y})=\displaystyle\int\limits_{\mathbb{R}^{n}}\rho(t;{\bf
x})d\Gamma({\bf x})=\displaystyle\int\limits_{\mathbb{R}^{n}}\rho\Bigl(t;{\bf x}(t;{\bf y})\Bigr)\textit{J}(t;{\bf
y})d\Gamma({\bf y}).
\end{equation*}
\end{proof}
As the random process ${\bf x}(t)$ is defined within the extended phase space of general dimension $\mathbb{R}^{n+1}$, then to determine the uniqueness of the trajectory the system of differential equations for stochastic  kernel  should consists of at least $n+1$  equations:
\begin{equation}\label{sys-yad}
\left\{
\begin{array}{l}
 d_{t}\rho_{l}(t;{\bf x})=
-\displaystyle\frac{\partial\rho_{l}(t;{\bf x})b_{i\,k}(t;{\bf x})
}{\partial x_{i}}dw_{k}(t) +\Bigl(-\displaystyle\frac{\partial
\left( \rho_{l}(t;{\bf x})a_{i}(t;{\bf x})\right) }{\partial
x_{i}}+\\
+\displaystyle\frac{1}{2}\frac{\partial^{\,2} \left(\rho_{l}(t;{\bf
x})b_{i\,k}(t;{\bf x})b_{j\,k}(t;{\bf x})\right) }{\partial x_{i} \partial x_{j}}\Bigr)dt + \\
 +\displaystyle\int\limits_{\mathbb{R}(\gamma)} \Bigl[\rho_{l}\left(t;{\bf
x}-g(t;{\bf x}^{-1}(t;{\bf x};\gamma);\gamma)\right)
 \cdot D\left( {\bf x}^{-1}(t;{\bf x};\gamma)  \right)-\rho_{l}(t;{\bf
 x})\Bigr]\nu(dt;d\gamma), \\
\rho_{l}(t;{\bf x}(t))\Bigl.\Bigr|_{t=0}=\rho_{l}(0;{\bf x}(0))=\rho_{l}(0;{\bf y}), \ \ \ \ \  l=\overline{1, n+1}.
\end{array}
\right.
\end{equation}

As you know the collection of kernels is called complete if any other function that is the integral invariant kernel of the $n$-order  can be   presented as the function of this collection' elements.
\begin{theorem}\label{th-yad}
The GSDE system  \eqref{Ayd01.1} possesses the complete collection of kernels, which consists of  $(n+1)$  functions and each of these functions is the solution for equation \eqref{Aydii}.
\end{theorem}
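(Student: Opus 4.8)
The plan is to construct the complete collection explicitly, as solutions of the Cauchy problem \eqref{sys-yad} for $n+1$ carefully chosen initial densities, and then to deduce completeness from Lemma~\ref{l1} by reducing it to an elementary fact about functions on $\mathbb{R}^{n}$.

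First I would fix $n+1$ nonnegative functions $\rho_{l}(0;\cdot)\in C_{0}^{2}$, $l=\overline{1,n+1}$, each normalized so that $\int_{\mathbb{R}^{n}}\rho_{l}(0;{\bf y})\,d\Gamma({\bf y})=1$ and satisfying the decay conditions \eqref{Aydii1}, and chosen so that on the open set where $\rho_{n+1}(0;\cdot)>0$ the $n$ ratios $\varphi_{l}({\bf y})=\rho_{l}(0;{\bf y})/\rho_{n+1}(0;{\bf y})$, $l=\overline{1,n}$, are functionally independent, i.e.\ $\det\bigl(\partial\varphi_{l}/\partial y_{j}\bigr)\neq 0$. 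Such a family is easy to produce, e.g.\ by multiplying one fixed smooth compactly supported density by suitable smooth positive factors. Letting $\rho_{l}(t;{\bf x})$ be the solution of \eqref{sys-yad}, each $\rho_{l}$ then solves \eqref{Aydii} by construction, and by Theorem~\ref{thro1} each $\rho_{l}$ is a stochastic kernel of the integral invariant of order $n$; this already settles the ``each function is a solution of \eqref{Aydii}'' part.

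Next I would exploit Lemma~\ref{l1}: for every $l$ it gives $\rho_{l}\bigl(t;{\bf x}(t;{\bf y})\bigr)\,\textit{J}(t;{\bf y})=\rho_{l}(0;{\bf y})$ with $\textit{J}(t;{\bf y})>0$, so dividing the relation for $\rho_{l}$ by that for $\rho_{n+1}$ kills the Jacobian and yields $\rho_{l}\bigl(t;{\bf x}(t;{\bf y})\bigr)/\rho_{n+1}\bigl(t;{\bf x}(t;{\bf y})\bigr)=\varphi_{l}({\bf y})$ for all $t$; hence each ratio $\rho_{l}/\rho_{n+1}$ ($l=\overline{1,n}$) is a stochastic first integral in the sense of Definition~\ref{Ayddf1}, and these are functionally independent because $\varphi$ is. For completeness, given an arbitrary kernel $\tilde\rho$ of order $n$, Lemma~\ref{l1} again gives $\tilde\rho\bigl(t;{\bf x}(t;{\bf y})\bigr)/\rho_{n+1}\bigl(t;{\bf x}(t;{\bf y})\bigr)=\psi({\bf y})$ with $\psi=\tilde\rho(0;\cdot)/\rho_{n+1}(0;\cdot)$ independent of $t$; since $\det(\partial\varphi_{l}/\partial y_{j})\neq 0$, the implicit function theorem lets me write ${\bf y}=\Psi(\varphi_{1},\dots,\varphi_{n})$, hence $\psi=H(\varphi_{1},\dots,\varphi_{n})$ with $H=\psi\circ\Psi$, and substituting ${\bf x}={\bf x}(t;{\bf y})$ gives $\tilde\rho(t;{\bf x})=\rho_{n+1}(t;{\bf x})\,H\bigl(\rho_{1}(t;{\bf x})/\rho_{n+1}(t;{\bf x}),\dots,\rho_{n}(t;{\bf x})/\rho_{n+1}(t;{\bf x})\bigr)$, expressing $\tilde\rho$ through $\rho_{1},\dots,\rho_{n+1}$.

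The hard part will be the construction step, together with the justification that the stochastic flow ${\bf y}\mapsto{\bf x}(t;{\bf y})$ is a diffeomorphism with $\textit{J}(t;{\bf y})>0$ — needed both for the Jacobian to cancel in the ratios and for the functional independence to persist in $t$, and implicitly assumed throughout the paper via the inverse ${\bf x}^{-1}(t;{\bf y};\gamma)$. One must exhibit $n+1$ functions that are at once legitimate kernels (nonnegative, normalized, $C_{0}^{2}$, with the prescribed decay) \emph{and} whose ratios are functionally independent, and the implicit function argument as stated is only local, so a fully global version would require an extra hypothesis or a covering argument.
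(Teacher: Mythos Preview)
Your approach is essentially the same as the paper's: both arguments hinge on Lemma~\ref{l1} to see that the ratios $\rho_{l}/\rho_{n+1}$ are constant along trajectories, then invoke functional independence of these ratios at $t=0$ together with an implicit-function/inversion step to write any further kernel as a function of the chosen $n+1$. The paper is terser---it simply posits the $(n+1)$ kernels and the inversion $x_{i}=q_{i}(\theta_{1},\ldots,\theta_{n})$ without your explicit construction or your caveats about locality and the diffeomorphism property of the flow---but the logical skeleton is identical.
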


\begin{proof}
Let  $\rho_{l}(t;{\bf x})$,  $l=\overline{1,m}$, $m\geq n+1$ -- are the integral invariant kernels of \eqref{Aydyad1}.
As follows from lemma \ref{l1} that for any $l\neq n+1$
the  ratio $\dfrac{\rho_{l}(t;{\bf x}(t;{\bf y}))}
{\rho_{n+1}(t;{\bf x}(t;{\bf y}))}$
is a constant depending only on  ${\bf x}(0)={\bf y}$  for  any solution  ${\bf x}(t)$  of system \eqref{Ayd01.1}:
$
\dfrac{\rho_{l}(t;{\bf x}(t;{\bf y}))}
{\rho_{n+1}(t;{\bf x}(t;{\bf y}))}=\dfrac{\rho_{l}(0;{\bf y})}
{\rho_{n+1}(0;{\bf y})}.
$
Let's construct the functions $\theta_{l}(t;{\bf x})= \rho_{l}(t;{\bf x})\,\rho_{n+1}^{-1}(t;{\bf
x})$, $l=\overline{1,n}$,  taking into account the linear independence of the functions $\rho_{l}(0;{\bf y})$  and  $\rho_{n+1}(0;{\bf y})$.

Depending on the specific  approach, described in the Introduction and the obtained independence of the functions $\theta_{l}(t;{\bf x})$ with $t=0$ it was possible to construct  a one-to-one correspondence:
\begin{equation}\label{inv-1}
x_{i}=q_{i}(\theta_{l},\theta_{2},\ldots,\theta_{n}).
\end{equation}

Let's introduce the following notation just to shorten the entries:
$$
\overrightarrow{\theta}=
\left\{\theta_{l},\theta_{2},\ldots,\theta_{n} \right\},$$
$$\overrightarrow{q}=\left\{q_{l},q_{2},\ldots,q_{n} \right\}
\in {\mathbb R}^{n},
$$
$$
\overrightarrow{\rho}(t;{\bf x})=\left\{\rho_{1}(t;{\bf
x}),\rho_{2}(t;{\bf x}),\ldots, \rho_{3}(t;{\bf x})   \right\}
\in {\mathbb R}^{n}.
$$
Because of the condition \eqref{inv-1} for some $s\geq1$  and any
other function
$
\chi(t;{\bf x})=\rho_{n+s}(t;{\bf x})\,\rho_{n+1}^{-1}(t;{\bf x})
$  at the time moment $t=0$ can be represented like this:
$$
\chi(0;{\bf y})=\rho_{n+s}(0;{\bf y})\,\rho_{n+1}^{-1}(0;{\bf y})=\overline{\psi} \Bigl(
\overrightarrow{q}(\overrightarrow{\theta})\Bigr).
$$
Then, consequently, for any function $\rho_{n+s}(t;{\bf x})$ being a kernel of the integral invariant  \eqref{Ayd01.1} and because the lemma \ref{l1} conclusion, we get:
$\theta_{l}(t;{\bf x})$:
$$
\chi(t;{\bf x}(t;{\bf  y}))=\rho_{n+s}(t;{\bf x}(t;{\bf
y}))\,\rho_{n+1}^{-1}(t;{\bf x}(t;{\bf  y}))=
$$
$$
=\rho_{n+s}(0;{\bf y})\,\rho_{n+1}^{-1}(0;{\bf y})=\chi(0;{\bf y})=
\overline{\psi} \biggl(
\overrightarrow{q}\Bigl(\rho_{n+1}^{-1}(t;{\bf x}(t;{\bf  y}))\cdot\overrightarrow{\rho}(t;{\bf x}(t;{\bf
y})) \Bigr)\biggr).
$$
It follows from this relation the for any  $ t\geq 0 $  and each $s\geq 1$:
$$
\rho_{n+s}(t;{\bf x}(t;{\bf  y}))=\rho_{n+1}(t;{\bf x}(t;{\bf
y}))\, \overline{\psi}  \biggl(
\overrightarrow{q}\Bigl(\rho_{n+1}^{-1}(t;{\bf x}(t;{\bf  y}))\cdot\overrightarrow{\rho}(t;{\bf x}(t;{\bf
y})) \Bigr)\biggr).
$$
Then the  complete collection of the kernels consists of $(n+1)$  linearly independent  functions.
\end{proof}

\begin{cor}\label{c1}
The complete collection of linear independent stochastic first integrals of system \eqref{Ayd01.1} consists of  $n$  functions.
\end{cor}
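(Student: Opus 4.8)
The plan is to obtain the corollary directly from Theorem~\ref{th-yad} and Lemma~\ref{l1}. Starting from the complete collection of kernels $\rho_{1}(t;{\bf x}),\dots,\rho_{n+1}(t;{\bf x})$ guaranteed by Theorem~\ref{th-yad}, form the $n$ functions $\theta_{l}(t;{\bf x})=\rho_{l}(t;{\bf x})\,\rho_{n+1}^{-1}(t;{\bf x})$, $l=\overline{1,n}$, exactly as in the proof of that theorem. First I would check that each $\theta_{l}$ is a stochastic first integral of \eqref{Ayd01.1} in the sense of Definition~\ref{Ayddf1}: Lemma~\ref{l1} gives $\rho_{l}(t;{\bf x}(t;{\bf y}))\,\textit{J}(t;{\bf y})=\rho_{l}(0;{\bf y})$ for each index, so the transition Jacobian $\textit{J}(t;{\bf y})$ cancels in the quotient and $\theta_{l}(t;{\bf x}(t;{\bf y}))=\rho_{l}(0;{\bf y})\,\rho_{n+1}^{-1}(0;{\bf y})=\theta_{l}(0;{\bf y})$ holds with probability one along every solution of \eqref{Ayd01.1}. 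This yields $n$ stochastic first integrals.

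Their independence is inherited from Theorem~\ref{th-yad}: the $\rho_{l}(0;{\bf y})$ are linearly independent, and that proof already converts this into the one-to-one correspondence \eqref{inv-1}, $x_{i}=q_{i}(\theta_{1},\dots,\theta_{n})$; hence $\theta_{1}(0;\cdot),\dots,\theta_{n}(0;\cdot)$ are functionally independent, and so are the first integrals $\theta_{1},\dots,\theta_{n}$. Thus a complete collection must contain at least $n$ of them.

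For completeness I would show that an arbitrary stochastic first integral $u(t;{\bf x};\omega)$ of \eqref{Ayd01.1} is a function of $\theta_{1},\dots,\theta_{n}$. Since \eqref{inv-1} expresses ${\bf y}$ through $\theta_{1}(0;{\bf y}),\dots,\theta_{n}(0;{\bf y})$, the initial value is representable as $u(0;{\bf y})=\Psi\bigl(\theta_{1}(0;{\bf y}),\dots,\theta_{n}(0;{\bf y})\bigr)$ with $\Psi(\cdot)=u\bigl(0;\overrightarrow{q}(\cdot)\bigr)$. Then, because $u$ and every $\theta_{l}$ are first integrals,
\begin{equation*}
\begin{array}{c}
u\bigl(t;{\bf x}(t;{\bf y})\bigr)=u(0;{\bf y})=\Psi\bigl(\theta_{1}(0;{\bf y}),\dots,\theta_{n}(0;{\bf y})\bigr)=\\
=\Psi\bigl(\theta_{1}(t;{\bf x}(t;{\bf y})),\dots,\theta_{n}(t;{\bf x}(t;{\bf y}))\bigr),
\end{array}
\end{equation*}
and since the solutions ${\bf x}(t;{\bf y})$ sweep out $\mathbb{R}^{n}$ this forces $u(t;{\bf x})=\Psi\bigl(\theta_{1}(t;{\bf x}),\dots,\theta_{n}(t;{\bf x})\bigr)$; this is the same computation performed for the auxiliary function $\chi$ in the proof of Theorem~\ref{th-yad}. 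Together with the previous paragraph this shows that a complete collection of linearly independent stochastic first integrals of \eqref{Ayd01.1} has exactly $n$ members.

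I expect the completeness step to be the delicate point. It relies on the global validity of the one-to-one correspondence \eqref{inv-1} on the phase space $\mathbb{R}^{n}$, on the fact that the solution flow of \eqref{Ayd01.1} starting from random initial data covers the phase space (so that a pointwise identity valid at $t=0$ propagates to all $t$ and all ${\bf x}$), and on checking that $\Psi$ inherits from $u$ and from the $\theta_{l}$ the measurability with respect to $\{\mathcal{F}\}_{0}^{T}$ and the regularity required of a first integral; once these points are secured, the statement follows immediately from Theorem~\ref{th-yad}.
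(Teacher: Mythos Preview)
Your proposal is correct and follows the paper's approach: you form the same quotients $\theta_{l}=\rho_{l}\,\rho_{n+1}^{-1}$ and invoke Lemma~\ref{l1} so that the Jacobian cancels, exactly the mechanism underlying the paper's one-line proof. The paper's own argument for the corollary is extremely terse---it simply asserts that the $\widetilde{u}_{l}=\rho_{l}\rho_{n+1}^{-1}$ satisfy Definition~\ref{Ayddf1} and that there are $n$ of them---so your version is a fleshed-out form of the same idea, with the independence and completeness steps (which the paper leaves implicit in the proof of Theorem~\ref{th-yad}) made explicit.
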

\begin{proof}
The function  $\widetilde{u}_{l}(t;{\bf x})=\rho_{l}(t;{\bf x})\rho_{n+1}^{-1}(t;{\bf x})$, possesses the properties described in the definition \ref{Ayddf1} and presents the stochastic first integral of the  system \eqref{Ayd01.1}. There is $n$ number
of such  functions.
\end{proof}

\begin{remark}
Let the random process  ${\bf x}(t)$ is the solution of the generalized  It\^{o}'s SDE  system that is presented like this:
\begin{equation*}
    {d}_{t}{\bf x}(t)=a(t;{\bf x}(t))dt + b(t;{\bf
    x}(t))d{\bf w}(t)+ dP(t,\Delta\gamma)=\widetilde{d}_{t}{\bf x}(t)+dP(t,\Delta\gamma).
\end{equation*}
Then  the generalized It\^{o}'s formula \eqref{Ayd2} can be taken down as an equation:
\begin{equation}\label{AydIt2}
       d_{t}f(t;{\bf x}(t))= \widetilde{d}_{t}f(t;{\bf
       x}(t))+\widetilde{d}_{t}P(t,\Delta\gamma),
\end{equation}
where  $\widetilde{d}_{t}f(t;{\bf x}(t))$ is the It\^{o}'s differential for the part $\widetilde{d}_{t}{\bf x}(t)$  and $\widetilde{d}_{t}P(t,\Delta\gamma)$   is the differential for the Poisson's component  $dP(t,\Delta\gamma)$.
\end{remark}

Let's construct the equation for $u(t;{\bf x})$  using the corollary \ref{c1} and the formula:
\begin{equation}\label{Ayd2.11}
\ln u_{s}(t;{\bf x})=\ln \rho_{s}(t;{\bf x})-\ln\rho_{l}(t;{\bf
x}).
\end{equation}
By using \eqref{Aydii},   the It\^{o}'s generalized formula and (\ref{AydIt2}) we'll do the differentiation of $\ln \rho(t;{\bf x})$:
\begin{equation}\label{Aydln}
\begin{array}{c}
  \displaystyle d_{t}\ln\rho(t;{\bf x})= \frac{1}{\rho(t;{\bf
x})}\widetilde{d}_{t}\rho(t;{\bf x})-\frac{1}{2\rho^{2}(t;{\bf
x})}\left(-\displaystyle\frac{\partial(\rho(t;{\bf
x})b_{i\,k}(t;{\bf
x})) }{\partial x_{i}}\right)^{2}dt +  \\
  +\displaystyle\int\limits_{R(\gamma)}\Bigl[\Bigr.\ln\left\{\right.
\rho_{s}\left( t;{\bf x}-g(t;{\bf
  x}(t;{\bf y});\gamma);\gamma\right)
  \cdot D\left( {\bf x}^{-1}(t;{\bf x};\gamma)  \right)
  \left.\right\}-\\
  -\ln\rho_{s}(t;{\bf x})\Bigl.\Bigr]\nu(dt;d\gamma).
\end{array}
\end{equation}
Now let us examine the sum of the first two summands, dropping the functions' arguments:
\begin{equation}\label{Aydlnp}
\begin{array}{c}
  S_{1}=\displaystyle \frac{1}{\rho}\, \widetilde{d}_{t}\rho-\frac{1}{2\rho^{2}}\,
  \left(-\displaystyle\frac{\partial(\rho b_{i\,k})}{\partial x_{i}}\right)^{2} =\\
  = \displaystyle\frac{1}{\rho} \biggl[\Bigl( -\frac{\partial (\rho a_{i})}{\partial x_{i}}+
  \frac{1}{2}\frac{\partial ^{\,2}(\rho b_{i\,k}b_{j\,k})}{\partial x_{i}\partial x_{j}}
  \Bigr)dt + \Bigl( -\displaystyle\frac{\partial(\rho b_{i\,k}) }{\partial
  x_{i}}\Bigr)dw_{k}(t)\biggr]
  -\\
  -\displaystyle\frac{1}{2\rho^{2}}\,\left(-\displaystyle\frac{\partial(\rho b_{i\,k})}{\partial
 x_{i}}\right)^{2}dt=\\
 = \displaystyle\biggl[\biggr.  - \frac{\partial  a_{i}}{\partial x_{i}}-a_{i}\frac{\partial \ln\rho}{\partial x_{i}} +
 \frac{1}{2\rho}\frac{\partial}{\partial x_{i} }\left(\rho\,\frac{\partial b_{i\,k}b_{j\,k}}{\partial
  x_{j}}+ b_{i\,k}b_{j\,k}\,\frac{\partial \rho}{\partial x_{i}} \right)-
  \\
  -\displaystyle\frac{1}{2\rho^{2}}\,\left(\displaystyle\rho\,\frac{\partial b_{i\,k}}{\partial
 x_{i}}+b_{i\,k}\, \frac{\partial \rho}{\partial
 x_{i}}\right)^{2}\biggl.\biggr]dt
 -\left( b_{i\,k}\displaystyle\frac{\partial\ln\rho }{\partial
 x_{i}}+\frac{\partial b_{i\,k}}{\partial
 x_{i}}\right)dw_{k}(t)=\\
 =S_{2}dt-\left( b_{i\,k}\displaystyle\frac{\partial\ln\rho }{\partial
 x_{i}}+\frac{\partial b_{i\,k}}{\partial
 x_{i}}\right)dw_{k}(t).
\end{array}
\end{equation}
We transform the part $S_{2}$:
\begin{equation}\label{Aydlnp1}
\begin{array}{c}
S_{2}= - \displaystyle \frac{\partial  a_{i}}{\partial
x_{i}}-a_{i}\frac{\partial \ln\rho}{\partial x_{i}} +
\frac{\partial b_{i\,k}b_{j\,k}}{\partial
  x_{j}}\frac{\partial \ln\rho}{\partial x_{i}}  +\\
  +
  \displaystyle\frac{1}{2}\,\frac{\partial^{\,2}( b_{i\,k}b_{j\,k})}{\partial x_{i}\partial x_{j}}
    +\frac{1}{2}b_{i\,k}b_{j\,k}\,\frac{1}{\rho}\,\frac{\partial^{\,2} \rho}{\partial x_{i}\partial x_{j}}
  -\\
  -
  \displaystyle\frac{1}{2}\,\left(\displaystyle\frac{\partial b_{i\,k}}{\partial
 x_{i}}+b_{i\,k}\, \frac{\partial \ln\rho}{\partial
 x_{i}}\right)\left(\displaystyle\frac{\partial b_{j\,k}}{\partial
 x_{j}}+b_{j\,k}\, \frac{\partial \ln\rho}{\partial
 x_{j}}\right)=\\
 =- \displaystyle \frac{\partial  a_{i}}{\partial
x_{i}}-a_{i}\frac{\partial \ln\rho}{\partial x_{i}} +
\frac{\partial b_{i\,k}b_{j\,k}}{\partial
  x_{j}}\frac{\partial \ln\rho}{\partial x_{i}}  +\\
  +
  \displaystyle\frac{1}{2}\,\frac{\partial^{\,2}( b_{i\,k}b_{j\,k})}{\partial x_{i}\partial
  x_{j}}+\frac{1}{2}\,b_{i\,k}b_{j\,k}\frac{\partial^{\,2} \ln\rho}{\partial x_{i}\partial
  x_{j}}+
  +\displaystyle\frac{1}{2}\,b_{i\,k}b_{j\,k}\frac{\partial\ln\rho}{\partial x_{i}}\,\frac{\partial\ln\rho}{\partial
  x_{j}}-\\
  -\displaystyle\frac{1}{2}\left[\frac{\partial b_{i\,k}}{\partial x_{i}}\,
    \frac{\partial b_{j\,k}}{\partial x_{j}} +2\, b_{i\,k}\frac{\partial b_{j\,k}}{\partial
    x_{j}}\,\frac{\partial \ln\rho}{\partial x_{i}} +
    b_{i\,k}b_{j\,k}\,\frac{\partial\ln\rho}{\partial x_{i}}\,\frac{\partial\ln\rho}{\partial
  x_{j}}  \right]=\\
  =- \displaystyle \frac{\partial  a_{i}}{\partial
x_{i}}-a_{i}\frac{\partial \ln\rho}{\partial x_{i}} +
\frac{\partial b_{i\,k}b_{j\,k}}{\partial
  x_{j}}\,\frac{\partial \ln\rho}{\partial x_{i}}  +\\
  +\displaystyle
  \frac{1}{2}\,\frac{\partial^{\,2}( b_{i\,k}b_{j\,k})}{\partial x_{i}\partial
  x_{j}}+\frac{1}{2}\,b_{i\,k}b_{j\,k}\frac{\partial^{\,2} \ln\rho}{\partial x_{i}\partial
  x_{j}}
    -\displaystyle\frac{1}{2}  \frac{\partial b_{i\,k}}{\partial x_{i}}\,
    \frac{\partial b_{j\,k}}{\partial x_{j}} - b_{i\,k}\frac{\partial b_{j\,k}}{\partial
    x_{j}}\,\frac{\partial \ln\rho}{\partial x_{i}}.
   \end{array}
\end{equation}
Hence, by substituting (\ref{Aydlnp1}) into (\ref{Aydlnp}) and then into (\ref{Aydln}). We get  the following equation for  $\ln
\rho(t;{\bf x})$:
\begin{equation}\label{Ayd2.11a}
\begin{array}{c}
  d_{t}\ln\rho(t;{\bf x})=\biggl[\biggr.- \displaystyle
  \frac{\partial a_{i}(t;{\bf x})}{\partial x_{i}}-
  a_{i}(t;{\bf x})\frac{\partial \ln\rho(t;{\bf x})}{\partial x_{i}}-\\
  -\displaystyle \frac{1}{2}\,\frac{\partial b_{i\,k}(t;{\bf x})}
  {\partial x_{i}}\,\frac{\partial b_{j\,k}(t;{\bf x})}
  {\partial x_{j}}
  +
  \displaystyle \frac{1}{2}b_{i\,k}(t;{\bf x})b_{j\,k}(t;{\bf x})
  \frac{\partial^{2} \ln\rho(t;{\bf x})}{\partial x_{i}\partial x_{j}}+\\
  +\displaystyle\frac{\partial \left(b_{i\,k}(t;{\bf x})b_{j\,k}(t;{\bf x})\right)}
  {\partial x_{i}}\,
  \frac{\partial \ln\rho(t;{\bf x})}{\partial  x_{j}}+\\
  +\displaystyle\frac{1}{2}\frac{\partial^{2}\left( b_{i\,k}(t;{\bf x})b_{j\,k}(t;{\bf x})\right)}
  {\partial x_{i}\partial x_{j}}
  - b_{j\,k}(t;{\bf x})\displaystyle\frac{\partial b_{i\,k}(t;{\bf x})}
  {\partial x_{i}}\,\frac{\partial \ln\rho(t;{\bf x})}{\partial x_{j}}\biggl.\biggr] dt +\\
+\displaystyle\int\limits_{R(\gamma)}\Bigl[\Bigr.\ln\left\{\right.
\rho\left( t;{\bf x}-g(t;{\bf
  x}(t;{\bf y});\gamma);\gamma\right)
  \cdot D\left( {\bf x}^{-1}(t;{\bf x};\gamma)  \right)
  \left.\right\}-\\
  -\ln\rho(t;{\bf x})\Bigl.\Bigr]\nu(dt;d\gamma)
  -\biggl[\displaystyle \frac{\partial b_{i\,k}(t;{\bf x})}{\partial x_{i}}+
b_{j\,k}(t;{\bf x})\frac{\partial \ln\rho(t;{\bf x})}{\partial
x_{j}}
   \biggr]dw_{k}(t).
\end{array}
\end{equation}

Relaying upon the obtained formula  (\ref{Ayd2.11a}) we  can construct the equations for $\rho_{s}(t;{\bf x})$   and   $\rho_{l}(t;{\bf x})$, and having taken  (\ref{Ayd2.11}) into consideration we make a conclusion that the stochastic first integral  $u(t;{\bf x};\omega)$  of the generalized It\^{o}'s equation is the solution for this equation:
\begin{equation}\label{Ayd2.12}
\begin{array}{c}
  d_{t}u(t;{\bf x})=\biggl[\biggr. -  a_{i}(t;{\bf x})\displaystyle\frac{\partial u(t;{\bf x})}{\partial
x_{i}} +\displaystyle \frac{1}{2}\,b_{i\,k}(t;{\bf
x})b_{j\,k}(t;{\bf x})
  \frac{\partial^{2} u(t;{\bf x})}{\partial x_{i} \partial x_{j}} -\\
-  b_{i\,k}(t;{\bf x})\displaystyle\frac{\partial}{\partial
x_{i}}\left(b_{j\,k}(t;{\bf x})\frac{\partial u(t;{\bf
x})}{\partial x_{j}}\right)\biggl.\biggr]dt
  - b_{i\,k}(t;{\bf x})\displaystyle\frac{\partial u(t;{\bf x})}{\partial
x_{i}}\,dw_{k}(t)+ \\
+ \displaystyle\int\limits_{R(\gamma)}\Bigl[u\Bigl(t; {\bf
x}-g(t;{\bf
  x}^{-1}(t;{\bf x};\gamma));\gamma\Bigr)
 -
  u(t;{\bf x})\Bigr]\nu(dt;d\gamma).
\end{array}
\end{equation}

The concept of the first integral for  It\^{o}'s SDE  system as the non random function over arbitrary random perturbation realizations
has been introduce by the article {\rm{\cite{D_78}}}:
$$
u\Bigl(0;{\bf x}(0)\Bigr)=u\Bigl(t;{\bf x}(t; {\bf x}(0))\Bigr).
$$
We can also speak however about a stochastic first integral for  It\^{o}'s SDE  system. It should be observed that we can also speak about stochastic  (but not  only about determinate) first integral when Poisson's perturbations are missing.

In case of just Wiener's perturbations (It\^{o} classical equation)
the investigation of the properties of the first integral as a determined function was associated with stating the independence of such a function from realization ${\bf w}(t)$ {\rm{\cite{D_78}}}.
In the case under consideration from definition \ref{Ayddf1} the function  $u\Bigl(t;{\bf x}\Bigr)$ with its equation (\ref{Ayd2.12}), depends over Poisson's perturbation are added (It\^{o} generalized equation) such a requirement, as it follows from  equation (\ref{Ayd2.12}), leads to the following conditions.

\begin{theorem}\label{th-usl}
The random function $u\left(t;{\bf x}(t)\right) \in \mathcal{C}_{t,x}^{1,2}$ defined over the same  probability space as the  random process ${\bf x}(t)$ that is the solution for system \eqref{Ayd01.1}  is  the first integral of the system \eqref{Ayd01.1} iff the function $u\left(t;{\bf x}(t)\right)$ satisfies the terms  $\left.\mathcal{L}\right)$\label{uslL1}:
\begin{enumerate}
    \item $b_{i\,k}(t;{\bf x})\displaystyle\frac{\partial u(t;{\bf x})}{\partial
x_{i}}=0$, for all $k=\overline{1,m}$ (compensation of the Wiener's perturbations);
    \item $\displaystyle\frac{\partial u(t;{\bf x})}{\partial
t}+\displaystyle\frac{\partial u(t;{\bf x})}{\partial
x_{i}}\Bigl[a_{i}(t;{\bf x})-
\displaystyle\frac{1}{2}\,b_{j\,k}(t;{\bf x})\frac{\partial
b_{i\,k}(t;{\bf x})}{\partial x_{j}}\Bigr]=0$ (independente of the time);
     \item $u(t;{\bf x})-u\Bigl(t;{\bf x}+g(t;{\bf
    x};\gamma)\Bigr)=0$ for any $\gamma\in R(\gamma)$
         in the whole field of the process definition   (compensation of the Poisson's jumps).
\end{enumerate}
\end{theorem}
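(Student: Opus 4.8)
The plan is to characterize the first-integral property of Definition~\ref{Ayddf1} through the stochastic differential of the compound process $u\bigl(t;\mathbf{x}(t;\mathbf{x}(0))\bigr)$. By Definition~\ref{Ayddf1}, $u$ is a first integral of \eqref{Ayd01.1} if and only if $u\bigl(t;\mathbf{x}(t;\mathbf{x}(0))\bigr)=u\bigl(0;\mathbf{x}(0)\bigr)$ with probability one for every solution, and since $u\in\mathcal{C}_{t,x}^{1,2}$ this is equivalent to the vanishing, along every trajectory, of $d_t u\bigl(t;\mathbf{x}(t)\bigr)$. The first step is therefore to write out this differential: applying the generalized It\^o formula \eqref{Ayd2} (for the deterministic dependence) together with the It\^o--Wentzell rule of Theorem~\ref{thro2} (to account for the fact that $u$ is itself an $\mathcal{F}_t$-adapted field satisfying an equation of the form \eqref{Aydz1}, namely \eqref{Ayd2.12}) produces a semimartingale decomposition of $d_t u(t;\mathbf{x}(t))$ into a $dt$-term with coefficient $\partial_t u+a_i\partial_{x_i}u+\tfrac12 b_{ik}b_{jk}\partial_{x_i x_j}u$, a Wiener part $b_{ik}\partial_{x_i}u\,dw_k$, and an uncompensated jump part $\int_{\mathbb{R}(\gamma)}\bigl[u(t;\mathbf{x}+g(t;\mathbf{x};\gamma))-u(t;\mathbf{x})\bigr]\nu(dt;d\gamma)$.

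For the necessity direction (first integral $\Rightarrow\mathcal{L}$) I would invoke uniqueness of this decomposition: if $d_t u(t;\mathbf{x}(t))\equiv 0$ then the three structurally independent components vanish individually. The continuous-martingale component forces $\sum_k\bigl(b_{ik}\partial_{x_i}u\bigr)^2=0$, hence $b_{ik}\partial_{x_i}u=0$ for every $k$ --- condition~1. Because the Poisson measure is non-centered, the jump part carries no compensator, so the jump of $u(t;\mathbf{x}(t))$ produced by a jump $\mathbf{x}\mapsto\mathbf{x}+g(t;\mathbf{x};\gamma)$ of the phase process must itself be zero for $\nu$-almost every $\gamma$ --- condition~3. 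Finally the residual finite-variation part must vanish, and here I would differentiate condition~1 with respect to $x_j$ and contract with $b_{jk}$, which rewrites $\tfrac12 b_{ik}b_{jk}\partial_{x_i x_j}u$ as $-\tfrac12 b_{jk}(\partial_{x_j}b_{ik})\partial_{x_i}u$; substituting this into the drift coefficient turns its vanishing into exactly $\partial_t u+\partial_{x_i}u\bigl[a_i-\tfrac12 b_{jk}\partial_{x_j}b_{ik}\bigr]=0$ --- condition~2. Since a solution of \eqref{Ayd01.1} may be started from an arbitrary point and $a,b,g$ and the derivatives of $u$ are continuous, pointwise vanishing along all trajectories upgrades to the identities of $\mathcal{L}$ holding for all $(t,\mathbf{x})$ in the domain of the process.

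For the sufficiency direction ($\mathcal{L}\Rightarrow$ first integral) the computation is reversed: substituting conditions~1--3 into the decomposition above, condition~1 annihilates the $dw_k$-integrals, condition~3 annihilates the $\nu(dt;d\gamma)$-integral, and condition~2 together with the differentiated form of condition~1 annihilates the $dt$-term; hence $d_t u(t;\mathbf{x}(t))=0$ along every solution, which by Definition~\ref{Ayddf1} means $u$ is a first integral. Alternatively one may read this off \eqref{Ayd2.12} directly: conditions~1 and~3 collapse its Wiener and Poisson parts and reduce its drift, and Theorem~\ref{thro2} then shows the trajectory differential equals $\bigl(\partial_t u+\partial_{x_i}u[a_i-\tfrac12 b_{jk}\partial_{x_j}b_{ik}]\bigr)dt$, which vanishes by condition~2.

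The main obstacle I anticipate is the legitimacy of the ``split into three independently vanishing parts'' step. This rests on uniqueness of the canonical semimartingale decomposition --- a continuous local martingale of finite variation is constant, and a purely discontinuous part is determined by its jumps --- and, crucially for the jump term, on the fact that the non-centered $\nu(dt;d\gamma)$ enters without a compensating drift, so that the integrand itself, not merely its compensated integral, must be zero; this is what distinguishes the present statement from the centered-measure case of \cite{D_02}. A secondary technical point is the index and sign bookkeeping in the drift simplification, which is precisely what produces the $-\tfrac12 b_{jk}\partial_{x_j}b_{ik}$ correction in condition~2, and the passage from ``holds along almost every trajectory'' to ``holds identically in $(t,\mathbf{x})$'', which uses the freedom in the initial data of \eqref{Ayd01.1}.
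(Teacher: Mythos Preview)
Your argument is correct and its algebraic core --- setting the $dw_k$, $\nu(dt;d\gamma)$ and $dt$ coefficients separately to zero, then differentiating condition~1 and contracting with $b_{jk}$ to convert $\tfrac12 b_{ik}b_{jk}\partial_{x_ix_j}u$ into $-\tfrac12 b_{jk}(\partial_{x_j}b_{ik})\partial_{x_i}u$ --- is exactly the computation the paper performs. The change of variables you use to pass from the jump condition on $u(t;\mathbf{x}-g(t;\mathbf{x}^{-1};\gamma))$ to condition~3 on $u(t;\mathbf{x}+g(t;\mathbf{x};\gamma))$ also matches the paper's step.

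The route to that decomposition differs. You work directly with the trajectory differential $d_t u(t;\mathbf{x}(t))$, obtain its semimartingale decomposition via \eqref{Ayd2}/Theorem~\ref{thro2}, and equate it to zero, invoking uniqueness of the canonical decomposition to split the vanishing into three pieces. The paper instead starts from the field equation \eqref{Ayd2.12} for $u(t;\mathbf{x})$ --- which was derived \emph{before} the theorem, through the integral-invariant-kernel machinery and the logarithmic identity \eqref{Ayd2.11} --- and then imposes $d_t u(t;\mathbf{x})=\partial_t u\,dt$, matching the right-hand side of \eqref{Ayd2.12} against $\partial_t u\,dt$ term by term. Your path is more self-contained (it does not need the kernel derivation of \eqref{Ayd2.12}) and you treat both implications and the justification for coefficient-matching more explicitly; the paper's path exploits the structure already built up in the preceding sections but in effect argues only the necessity direction. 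One small wrinkle in your write-up: once you invoke \eqref{Ayd2.12} as the field equation for $u$, the It\^o--Wentzell formula of Theorem~\ref{thro2} would in principle contribute its $\Pi$, $D_k$, $G$ terms as well, not just the deterministic $\partial_t u$; the clean decomposition you actually write down is the one coming from \eqref{Ayd2} for a deterministic $u$, which is the right object here and is also what the paper uses when it sets $d_t u=\partial_t u\,dt$.
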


\begin{proof}
Let's examine the equation \eqref{Ayd2.12}, the solution of  which is the  function  $u\left(t;{\bf x}(t)\right)$. First let's carry everything to one side from the equals sing. Having taken into account that $d_{t}u(t;{\bf x})=\dfrac{\partial u(t;{\bf x})}{\partial t}\,dt$  at we get for any  $t$:
\begin{equation*}
\begin{array}{c}
  \biggl[\biggr.\dfrac{\partial u(t;{\bf x})}{\partial t}+   a_{i}(t;{\bf x})\displaystyle\frac{\partial u(t;{\bf x})}{\partial
x_{i}} -\displaystyle \frac{1}{2}\,b_{i\,k}(t;{\bf
x})b_{j\,k}(t;{\bf x})
  \frac{\partial^{2} u(t;{\bf x})}{\partial x_{i} \partial x_{j}} +\\
+   b_{i\,k}(t;{\bf x})\displaystyle\frac{\partial}{\partial
x_{i}}\left(b_{j\,k}(t;{\bf x})\frac{\partial u(t;{\bf
x})}{\partial x_{j}}\right)\biggl.\biggr]dt
  + b_{i\,k}(t;{\bf x})\displaystyle\frac{\partial u(t;{\bf x})}{\partial
x_{i}}\,dw_{k}(t)- \\
- \displaystyle\int\limits_{R(\gamma)}\Bigl[u\Bigl(t; {\bf
x}-g(t;{\bf
  x}^{-1}(t;{\bf x};\gamma));\gamma\Bigr)
 +
  u(t;{\bf x})\Bigr]\nu(dt;d\gamma)=0.
\end{array}
\end{equation*}
Hence, the multipliers must be equal to zero with $dt$,   $dw(t)$  and $\nu(dt;d\gamma)$. The Wiener's summand equaled zero:
For  the Wiener part we have:
\begin{equation}\label{usl-win}
b_{i\,k}(t;{\bf x})\displaystyle\frac{\partial u(t;{\bf x})}{\partial
x_{i}}=0 \ \ \ \ \textrm{for all}\ \ k=\overline{1,m}.
\end{equation}
For  Poisson's part we'll have:
\begin{equation}\label{usl-puass}
u\Bigl(t; {\bf x}-g(t;{\bf x}^{-1}(t;{\bf x};\gamma));\gamma\Bigr)-
u(t;{\bf x})=0.
\end{equation}
Now we re-arrange the equality  \eqref{usl-puass} and proceed to variables:
$${\bf y}={\bf x}-g(t;{\bf x}^{-1}(t;{\bf x};\gamma);\gamma).$$
But taking into account that ${\bf x}^{-1}(t;{\bf x};\gamma)$  is the notation an inverse function for  $f({\bf x})={\bf x}+g(t;{\bf x};\gamma)$,  we get convinced that  this term is equivalent to the following:
\begin{equation}\label{usl-puass-1}
u(t;{\bf x})-u\Bigl(t;{\bf x}+g(t;{\bf
    x};\gamma)\Bigr)=0 \ \ \ \textrm{for any} \ \ \ \gamma\in R(\gamma).
\end{equation}
Then, using the rule of differentiating the product and the term
\eqref{usl-win}, we obtain:
\begin{equation*}
\begin{array}{c}
  \dfrac{\partial u(t;{\bf x})}{\partial t}+
  a_{i}(t;{\bf x})\dfrac{\partial u(t;{\bf x})}{\partial
x_{i}} + \dfrac{1}{2}\,b_{i\,k}(t;{\bf
x})b_{j\,k}(t;{\bf x})
  \dfrac{\partial^{2} u(t;{\bf x})}{\partial x_{i} \partial x_{j}}\, -\\
-   b_{i\,k}(t;{\bf x})\dfrac{\partial}{\partial
x_{i}}\left(b_{j\,k}(t;{\bf x})\dfrac{\partial u(t;{\bf
x})}{\partial x_{j}}\right)= \\
  =  \dfrac{\partial u(t;{\bf x})}{\partial t}+
  a_{i}(t;{\bf x})\cfrac{\partial u(t;{\bf x})}{\partial
x_{i}}\, +\\
+ \dfrac{1}{2}\,b_{i\,k}(t;{\bf
x})\left[\dfrac{\partial}{\partial x_{i}}\Bigl(b_{j\,k}(t;{\bf x})
\dfrac{\partial u(t;{\bf x})}{\partial x_{j}}\Bigr)-
\dfrac{\partial u(t;{\bf x})}{\partial x_{j}}
\dfrac{\partial b_{jk}(t;{\bf x})}{\partial x_{i}}\right]=\\
= \dfrac{\partial u(t;{\bf x})}{\partial t}+
  a_{i}(t;{\bf x})\cfrac{\partial u(t;{\bf x})}{\partial
x_{i}}\, - \dfrac{1}{2}\,b_{i\,k}(t;{\bf
x})
\dfrac{\partial u(t;{\bf x})}{\partial x_{j}}
\dfrac{\partial b_{jk}(t;{\bf x})}{\partial x_{i}}.
\end{array}
\end{equation*}
Consequently,
$$\displaystyle\frac{\partial u(t;{\bf x})}{\partial
t}+\displaystyle\frac{\partial u(t;{\bf x})}{\partial
x_{i}}\Bigl[a_{i}(t;{\bf x})-
\displaystyle\frac{1}{2}\,b_{j\,k}(t;{\bf x})\frac{\partial
b_{i\,k}(t;{\bf x})}{\partial x_{j}}\Bigr]=0.$$
Thus, we get all the terms of  $\left.\mathcal{L}\right)$.
\end{proof}

\begin{remark}
If we analyzed the concrete realization then the non random function  $u(t;{\bf x})$ is the determinate first integral of the stochastic system.
\end{remark}

\begin{remark}
The concept of the stochastic first integral for  a centered Poisson measure was introduced in {\rm\cite{D_02}}. The obtained conditions for its realization take the necessity of determining the density of intensive Poisson distribution. Thus, it makes no difference what is the probability distribution of intensities of Poisson jumps. This case is very important for constructing program controls {\rm{\cite{11_KchUpr}}}.
\end{remark}

The presented generalization of the It\^{o}-Wentzell formula and the stochastic first integral concept {\rm{\cite{D_02}}}  allow to construct of the program control with the probability is equaled to 1 for dynamic systems which being subjected to the Wiener perturbations and the Poisson jumps {\rm{\cite{11_KchUpr}}}.

\end{document}